\numberwithin{equation}{section}
\newtheorem{tet}{Theorem}
\newtheorem*{Tet}{Theorem}
\newtheorem{lem}{Lemma}
\newtheorem{kov}[tet]{Corollary}
\theoremstyle{remark}
\newtheorem{prob}{Problem}
\theoremstyle{definition}
\newcommand{\seq}[2]{\left(#1\right)_{#2}}
\newcommand{\p}{\mathbb{P}}
\begin{document}

\title{Multiplicative complements I.}
\author{Anett Kocsis \thanks{E\" otv\" os Lor\' and University, Budapest, Hungary. Email: sakkboszi@gmail.com. Supported by the ÚNKP-21-1 New National Excellence Program of the Ministry for Innovation and Technology from the source of the National Research, Development and Innovation Fund. }, D\' avid Matolcsi \thanks{E\" otv\" os Lor\' and University, Budapest, Hungary. Email: matolcsidavid@gmail.com. Supported by the ÚNKP-21-1 New National Excellence Program of the Ministry for Innovation and Technology from the source of the National Research, Development and Innovation Fund.}, Csaba S\' andor \thanks{Department of Stochastics, Institute of Mathematics, Budapest University of
Technology and Economics, M\H{u}egyetem rkp. 3., H-1111, Budapest, Hungary. Department of Computer Science and Information Theory, Budapest University of Technology and Economics, M\H{u}egyetem rkp. 3., H-1111 Budapest, Hungary, MTA-BME Lend\"ulet Arithmetic Combinatorics Research Group,
  ELKH, M\H{u}egyetem rkp. 3., H-1111 Budapest, Hungary , MTA-BME Lend\"ulet Arithmetic Combinatorics Research Group H-1529 B.O. Box, Hungary. Email: csandor@math.bme.hu.
This author was supported by the NKFIH Grants No. K129335.} and Gy\"orgy T\H ot\H os \thanks{ Faculty of Mathematics and Computer Science, Babe\c{s}-Bolyai University. }}
\date{January 2022}

    \maketitle

\begin{abstract}
In this paper, we study how dense a multiplicative basis of order $h$ for $\mathbb{Z}^+\!$ can be, improving on earlier results. Upon introducing the notion of a \textit{multiplicative complement}, we present some tight density bounds.
\end{abstract}

\section{Introduction}
\def\MB{\mathop{\rm MB}\nolimits}%

Let $\mathbb{Z}^+\!$ denote the set of positive integers. For $A\subseteq\mathbb{Z}^+\!$ and $h\in\mathbb{Z}^+\!$, the \textit{multiplicative representation function} $S_{A,h}(n)$ \textit{of order $h$} counts the ordered representations of $n\in\mathbb{Z}^+\!$ as a product of $h$ elements of $A$; that is, we define
$$
S_{A,h}(n)=|\{(a_1,\ldots,a_h):a_i\in A\text{ and }a_1\cdots a_h=n\}|.
$$
We say that $A$ is a \textit{multiplicative basis of order $h$ for $\mathbb{Z}^+\mkern-1mu$} if all positive integers can be written in the form $a_1\cdots a_h$ for some $a_i\in A$, which we may also express as having $S_{A,h}(n)\ge 1$ for all $n\in \mathbb{Z}^+\!$. We denote the set of multiplicative bases like $A$ by $\MB_h$. The \textit{counting function} $A(n)$ now counts the elements of $A$ that are less than or equal to $n$; that is, we write $A(n)=|A\cap \{ 1,2,\ldots,n\}|$. It is easy to see that for $A\in\MB_h$, the prime numbers are necessarily members of $A$, and hence
$$
A(n)\ge\pi(n)=(1+o(1))\frac{n}{\log n}.
$$
In 1938, Raikov proved the following density bounds.

\begin{tet}[{\cite[Th. 1]{raikov1938multiplicative}}]
Let $h\in\mathbb{Z}^+\!$. Then:
\begin{enumerate}
\item For all $A\in\MB_h$, we have $\displaystyle\limsup_{x\to\infty}A(x)\frac{\log^{1-\frac{1}{h}}x}{x}\ge\frac{1}{\Gamma\left(\frac{1}{h}\right)}$.
\item There exists $A\in\MB_h$ with $\displaystyle\limsup_{x\to\infty}A(x)\frac{\log^{1-\frac{1}{h}}x}{x}<\infty$.
\end{enumerate}
\end{tet}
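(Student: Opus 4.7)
The plan is to treat both halves of Raikov's theorem via Dirichlet series.

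For part (1), set $F_A(s) = \sum_{a \in A} a^{-s}$ for real $s > 1$. Since $A \in \MB_h$ means $S_{A,h}(n) \ge 1$ for every $n$, we obtain $F_A(s)^h = \sum_{n} S_{A,h}(n) n^{-s} \ge \zeta(s)$. I argue by contradiction: suppose $A(t) \le C t/\log^{1-1/h} t$ for all $t \ge T_0$, with some $C < 1/\Gamma(1/h)$. By partial summation,
\[
F_A(s) \;=\; s\int_1^\infty A(t)\,t^{-s-1}\,dt \;\le\; O(1) + Cs\int_{T_0}^\infty \frac{dt}{t^s \log^{1-1/h} t}.
\]
The substitution $u = (s-1)\log t$, together with $\int_0^\infty u^{1/h-1} e^{-u}\,du = \Gamma(1/h)$, shows that the remaining integral equals $(s-1)^{-1/h}\Gamma(1/h)\bigl(1+o(1)\bigr)$ as $s \to 1^+$. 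Raising to the $h$-th power yields $F_A(s)^h \le C^h \Gamma(1/h)^h (s-1)^{-1}(1+o(1)) < \zeta(s)$ for $s$ close to $1$, contradicting $F_A(s)^h \ge \zeta(s)$. The only technicality here is the uniformity of the integral asymptotic as $s \to 1^+$.

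For part (2), the plan is to construct $A$ explicitly by partitioning the primes. Using (say) Dirichlet's theorem on primes in arithmetic progressions, split the primes into $h$ disjoint classes $P_1, \ldots, P_h$ each of logarithmic density $1/h$ --- that is, with $\sum_{p \in P_i,\, p \le x} 1/p = (1/h)\log\log x + O(1)$ --- and set
\[
A \;=\; \bigcup_{i=1}^{h} \bigl\{\, n \in \mathbb{Z}^+ \;:\; p \mid n \Rightarrow p \in P_i \,\bigr\}.
\]
Then $A \in \MB_h$ automatically, via the decomposition $n = a_1 \cdots a_h$ with $a_i = \prod_{p \in P_i} p^{v_p(n)}$: each $a_i$ has all its prime factors in $P_i$, hence lies in $A$, and their product is $n$.

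The density estimate $A(x) = O\bigl(x/\log^{1-1/h} x\bigr)$ then follows from Wirsing's theorem (or a Selberg-Delange computation applied to the multiplicative indicator of $P_i$-smooth integers): for each $i$, the count of $n \le x$ whose prime factors all lie in $P_i$ is asymptotically $c_i x/(\log x)^{1-1/h}$, so summing over $i$ and subtracting the trivial overlap at $n=1$ gives the claim. I expect the main obstacle to be the density estimate for a single class $A_i$; once the logarithmic density of $P_i$ is in hand this is a standard application of the Wirsing/Selberg-Delange framework, but some care is needed to make the implicit constants uniform enough to reach $\limsup < \infty$.
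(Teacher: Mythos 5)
Your argument is, in substance, the paper's own machinery: the paper only cites this theorem to Raikov, but your part (1) is exactly Lemma \ref{lem:0:infsup} combined with the observation $A[s]^h=\sum_n S_{A,h}(n)n^{-s}\ge\zeta(s)$ (the paper pushes the same Dirichlet-series argument further in Lemma \ref{factorial_raikov} to gain the extra factor $\sqrt[h]{h!}$), and your part (2) is the same prime-partition-plus-Wirsing--Odoni construction used for the sharper Theorem \ref{precision:bound:2}. One imprecision in part (2): the logarithmic-density hypothesis $\sum_{p\in P_i,\,p\le x}1/p=\frac1h\log\log x+O(1)$ does not by itself yield the Wirsing asymptotic $c_i\,x/(\log x)^{1-1/h}$ --- the Wirsing--Odoni theorem requires the counting-function condition $P_i(x)=\frac1h\frac{x}{\log x}+O(x/\log^r x)$, and with only logarithmic density the smooth-number count can oscillate. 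This is easily repaired: the partitions you have in mind (grouped residue classes, or every $h$-th prime as in the paper's $F_3^j$) do satisfy the stronger condition, and in any case the upper bound $A_i(x)=O(x/\log^{1-1/h}x)$, which is all that $\limsup<\infty$ needs, already follows from logarithmic density via the standard inequality $\sum_{n\le x}f(n)\ll\frac{x}{\log x}\exp\bigl(\sum_{p\le x}f(p)/p\bigr)$ for multiplicative $0\le f\le1$.
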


$\\$
Note that $\Gamma\left(\frac{1}{h}\right)=(1+o(1))h$ as $h\to \infty$. In 2018, Pach and S\' andor improved upon these inequalities.

\begin{tet}[{\cite[Th. 3]{pach2018multiplicative}}]
Let $h\in\mathbb{Z}^+\!$. Then:
\begin{enumerate}
\item For all $A\in\MB_h$, we have $\displaystyle\limsup_{x\to\infty}A(x)\frac{\log^{1-\frac{1}{h}}x}{x}\ge\frac{\sqrt{6}}{e\pi}$.
\item There exists $C>0$ such that for each $h\ge 2$, one can find $A\in\MB_h$ with
\[
\mkern-100mu\limsup_{x\to \infty}A(x)\frac{\log^{1-\frac{1}{h}}x}{x}=C.
\]
\end{enumerate}
\end{tet}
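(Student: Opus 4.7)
I would proceed via Raikov's Dirichlet-series framework, with a second lower bound added. Set $f(s)=\sum_{a\in A}a^{-s}$; then $f(s)^h=\sum_n S_{A,h}(n)n^{-s}\ge\zeta(s)$ as in Raikov. If moreover $A(x)\le (C_0+\varepsilon)\,x(\log x)^{1/h-1}$ for $x\ge X_0$, then Abel summation gives
\begin{equation*}
f(s)\ \le\ s(C_0+\varepsilon)\int_{X_0}^\infty(\log x)^{1/h-1}\,x^{-s}\,dx+O(1),
\end{equation*}
whose main term is $\sim s(C_0+\varepsilon)\,\Gamma(1/h)(s-1)^{-1/h}$ as $s\to 1^+$, yielding Raikov's bound $C_0\ge 1/\Gamma(1/h)$. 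To obtain an $h$-independent constant, the crucial new input is a \emph{second} lower bound on $f$: since $1$ and every prime must belong to $A$, one has
\begin{equation*}
f(s)\ \ge\ 1+P(s)\ =\ \log(1/(s-1))+O(1)\qquad\text{(Mertens).}
\end{equation*}
Balancing these two lower bounds against the single upper bound at an appropriately chosen $s$ already yields an $h$-independent lower bound on $C_0$. To extract the precise constant $\sqrt{6}/(e\pi)=1/(e\sqrt{\zeta(2)})$, I would refine the squarefree part of the calculation: the inequality $S_{A,h}(n)\ge (h)_{\omega(n)}$ for squarefree $n$ with $\omega(n)\le h$ (distribute the primes into $h$ ordered slots and fill the rest with $1$'s), together with $\sum_{n\text{ sq.free}}n^{-s}=\zeta(s)/\zeta(2s)$, naturally delivers a factor $1/\sqrt{\zeta(2)}$, while the $1/e$ emerges from the balancing step.

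\textbf{Plan for Part 2.}
For the construction I would build $A\in\MB_h$ explicitly with an $h$-uniform density estimate. The candidate is a layered set: take $\{1\}\cup\{\text{primes}\}$ plus a canonical family of composites obtained by partitioning each integer's prime factorisation into $h$ ordered blocks of roughly equal logarithmic size via a deterministic greedy rule, and including all resulting blocks in $A$. Then $A\in\MB_h$ by construction, and $A(x)$ is controlled by counting building blocks of size at most $x$; a Dirichlet-series or Perron-type calculation gives $A(x)\ll x(\log x)^{1/h-1}$, and with a careful splitting rule the implicit constant can be made independent of $h$.

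\textbf{Main obstacle.}
The principal difficulty in both parts is uniformity in $h$. In part 1 this requires a delicate balancing of two lower bounds on $f(s)$ of very different growths ($\zeta(s)^{1/h}$, which dominates for small $h$; and $1+P(s)$, which dominates for large $h$), together with a sharper-than-asymptotic analysis of the Abel integral to capture the squarefree factor $1/\sqrt{\zeta(2)}$. In part 2 the challenge is to design the splitting rule so that the density constant does not blow up with $h$; a naive uniform splitting fails to control the combinatorial explosion, which is precisely what distinguishes the Pach--S\'andor construction from Raikov's.
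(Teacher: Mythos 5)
First, a point of orientation: this theorem is quoted from Pach and S\'andor \cite{pach2018multiplicative} and is not proved in the present paper, so there is no in-paper argument to compare yours against; I can only assess your plan on its own terms, and it has genuine gaps in both parts.

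For part (1), the central mechanism --- balancing the two lower bounds $f(s)\ge\zeta(s)^{1/h}$ and $f(s)\ge 1+\sum_p p^{-s}$ against the Mellin upper bound ``at an appropriately chosen $s$'' --- cannot work for fixed $h$. The hypothesis you are contradicting only controls $A(x)$ for $x\ge X_0$, so your upper bound really reads $f(s)\le s(C_0+\varepsilon)\Gamma(1/h)(s-1)^{-1/h}+K$ with $K=K(A,X_0)$ an uncontrolled constant; it is usable only in the limit $s\searrow 1$. But as $s\searrow 1$ the prime-sum bound $\log\frac{1}{s-1}+O(1)$ is negligible against $(s-1)^{-1/h}$ for every fixed $h$, so it contributes nothing there; and choosing instead a fixed $s$ (say $s-1=e^{-h}$, where the balance would occur) leaves the additive term $K$, which may well exceed $h$ since $h$ is fixed and $K$ is not uniform over $A$. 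To beat $1/\Gamma(1/h)$ one must improve the \emph{coefficient} of $(s-1)^{-1/h}$ in the lower bound for $f(s)$, not add a lower-order term; this is exactly what Lemma \ref{factorial_raikov} of the present paper does via $S_{A,h}(n)\ge h!$ whenever $n$ admits a representation with pairwise distinct parts. Relatedly, your inequality $S_{A,h}(n)\ge (h)_{\omega(n)}$ for squarefree $n$ is false: ``distributing the primes into $h$ ordered slots'' counts \emph{potential} factorizations and yields the upper bound $S_{A,h}(n)\le h^{\omega(n)}$ (the resulting blocks need not lie in $A$); from one representation with $m$ non-unit parts one only gets $S_{A,h}(n)\ge h!/(h-m)!$, and $m$ can be as small as $1$ (take $n\in A$ and the representation $n\cdot 1\cdots 1$). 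Finally, $\sum_{n\ \mathrm{squarefree}}n^{-s}=\zeta(s)/\zeta(2s)\sim\zeta(2)^{-1}(s-1)^{-1}$ contributes only $\zeta(2)^{-1/h}\to 1$ after the $h$-th root, not the needed $h$-independent factor $\zeta(2)^{-1/2}$; to get that factor the pre-root quantity would have to carry $\zeta(2)^{-h/2}$, which your sketch does not produce.

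For part (2), the construction is not specified at a level where it could be checked: ``including all resulting blocks in $A$'' for every integer gives no a priori control on $A(x)$, and that density estimate is the entire content of the statement. Moreover the claim to be proved is an exact value, $\limsup(\cdots)=C$ with a single $C$ valid for every $h\ge2$, which a one-sided bound $A(x)\ll x(\log x)^{1/h-1}$ ``with implicit constant independent of $h$'' cannot deliver. Note that Theorem \ref{precision:bound:2} of the present paper and its corollaries already yield a sharp version of part (2) (with $C=2/\sqrt{\pi}$) by partitioning the primes and invoking the Wirsing--Odoni theorem; that is the kind of quantitative machinery your outline would need to supply.
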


\vskip16pt
The following theorem provides an even better lower bound.

\begin{tet}\label{raikov:better}
Let $h\in\mathbb{Z}^+\!$. For all $A\in\MB_h$, we have $\displaystyle\limsup_{x\to \infty}A(x)\frac{\log^{1-\frac{1}{h}}x}{x}\ge\frac{\sqrt[h]{h!}}{\Gamma\left(\frac{1}{h}\right)}$.
\end{tet}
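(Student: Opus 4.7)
The plan is to sharpen Raikov's Dirichlet series argument by showing that for any $A \in \MB_h$, the series $F(s) := \sum_{a \in A} a^{-s}$ satisfies $F(s)^h \ge h!\,(1 - o(1))\,\zeta(s)$ as $s \to 1^+$, instead of the weaker $F(s)^h \ge \zeta(s)$ that underlies the classical bound $1/\Gamma(1/h)$. I will proceed by contradiction: suppose $K := \limsup_{x \to \infty} A(x) \log^{1-1/h} x / x < \sqrt[h]{h!}/\Gamma(1/h)$, and fix $\epsilon > 0$ with $(K+\epsilon)\Gamma(1/h) < \sqrt[h]{h!}$. Then $A(x) \le (K+\epsilon) x/\log^{1-1/h} x$ for $x$ large, and Abel summation applied to the integral representation $F(s) = s \int_1^\infty A(x) x^{-s-1}\,dx$ yields the upper bound $F(s) \le (K+\epsilon+o(1))\Gamma(1/h)(s-1)^{-1/h}$ as $s \to 1^+$.

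The key algebraic step is the decomposition
\[
F(s)^h = \sum_{(a_1,\ldots,a_h) \in A^h}(a_1 \cdots a_h)^{-s} = h!\,E(s) + R(s),
\]
where $E(s) = \sum_{\{a_1 < \cdots < a_h\} \subseteq A}(a_1 \cdots a_h)^{-s}$ is the generating series of strictly increasing $h$-tuples (each unordered $h$-subset of $A$ contributes exactly $h!$ ordered tuples to $F(s)^h$) and $R(s)$ accounts for the ordered tuples with at least one repeated entry. A union bound over the $\binom{h}{2}$ choices of equal pair gives $R(s) \le \binom{h}{2} F(2s) F(s)^{h-2}$; since $F(2s) \le \zeta(2s)$ is bounded near $s=1$ and $F(s)^{h-2} = O((s-1)^{-(h-2)/h})$ by the Abelian upper bound above, we conclude $R(s) = O((s-1)^{-(h-2)/h}) = o(\zeta(s))$ as $s \to 1^+$.

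The crucial observation is that for any integer $n$ which is \emph{not} expressible as a product $a_1 \cdots a_h$ of \emph{distinct} elements of $A$, every $A$-representation of $n$ necessarily has a repeated factor. Letting $B$ denote the set of such ``all-distinct'' products, each ordered representation of any $n \in B^c$ is therefore counted in $R(s)$; combined with the basis property $S_{A,h}(n) \ge 1$, this yields
\[
\sum_{n \in B^c} n^{-s} \le \sum_{n \in B^c} S_{A,h}(n)\,n^{-s} \le R(s) = o(\zeta(s)).
\]
Therefore $E(s) \ge \sum_{n \in B} n^{-s} = \zeta(s) - \sum_{n \in B^c} n^{-s} \ge (1-o(1))\zeta(s)$, and consequently $F(s)^h = h!\,E(s) + R(s) \ge h!\,(1-o(1))\zeta(s)$.

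Extracting $h$-th roots gives $F(s) \ge \sqrt[h]{h!}\,(1-o(1))(s-1)^{-1/h}$, which combined with the upper bound $F(s) \le (K+\epsilon+o(1))\Gamma(1/h)(s-1)^{-1/h}$ forces $(K+\epsilon)\Gamma(1/h) \ge \sqrt[h]{h!}$ upon letting $s \to 1^+$, contradicting the choice of $\epsilon$. The main obstacle I anticipate is the rigorous tracking of the $o(1)$ error terms through the Abelian passages -- in particular, verifying that the $O((s-1)^{-1/h})$ upper bound on $F(s)$ (essential both for bounding $R(s)$ and for the final comparison) really does follow uniformly from the limsup hypothesis, and that the various error terms combine cleanly so that the desired inequality $(K+\epsilon)\Gamma(1/h) \ge \sqrt[h]{h!}$ survives after taking $\epsilon \to 0$.
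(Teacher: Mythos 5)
Your proposal is correct and follows essentially the same route as the paper: the upper bound $F(s)\le (K+\epsilon+o(1))\Gamma(1/h)(s-1)^{-1/h}$ is exactly the paper's Lemma~\ref{lem:0:infsup}, and your decomposition $F(s)^h=h!\,E(s)+R(s)$ with $R(s)\le\binom{h}{2}F(2s)F(s)^{h-2}=o(\zeta(s))$ is the same distinct-versus-repeated counting that the paper packages via the auxiliary set $B=A\cup\{a^2:a\in A\}$ and the inequality $\tfrac{1}{h!}A[s]^h+B[s]^{h-1}\ge\zeta(s)$ in Lemma~\ref{factorial_raikov}. The only difference is organizational (you argue by contradiction rather than stating the two lemmas separately), so the argument is sound.
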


Let us note that for $h\ge 2$, the sequence $\frac{\sqrt[h]{h!}}{\Gamma\left(\frac{1}{h}\right)}$ is decreasing in $h$, and its limit is $\frac{1}{e}$. Consequently:

\begin{kov}
With $h\ge 2$, for all $A\in\MB_h$, we have $\displaystyle\limsup_{x\to\infty}A(x)\frac{\log^{1-\frac{1}{h}}x}{x}>\frac{1}{e}$.
\end{kov}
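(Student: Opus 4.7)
The plan is to argue by contradiction. Assume
$$\limsup_{x\to\infty}A(x)\frac{\log^{1-1/h}x}{x}<\frac{(h!)^{1/h}}{\Gamma(1/h)},$$
so that $A(y)\leq Cy/\log^{1-1/h}y$ for all sufficiently large $y$ and some $C<(h!)^{1/h}/\Gamma(1/h)$. We will estimate
$$T(x):=\sum_{n\leq x}S_{A,h}(n)=\#\{(a_1,\ldots,a_h)\in A^h:a_1\cdots a_h\leq x\}$$
from above via the density hypothesis and from below via $A\in\MB_h$, and derive a contradiction.

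For the upper bound we repeat Raikov's computation. Writing $T_k(x)$ for the same count with $k$ factors and using the recursion $T_k(x)=\sum_{a\in A,\,a\leq x}T_{k-1}(x/a)$ together with Abel summation, the substitution $u=\log y$ reduces the iterated convolution to a Beta integral. Telescoping then produces
$$T_k(x)\leq\frac{C^k\,\Gamma(1/h)^k}{\Gamma(k/h)}\,x(\log x)^{k/h-1}(1+o(1)),\qquad k\geq 1,$$
and in particular $T(x)=T_h(x)\leq(C\Gamma(1/h))^h\,x\,(1+o(1))$, which is Raikov's bound.

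The new ingredient is the lower bound. Let $U(x)$ count unordered $h$-element multisets of elements of $A$ with product at most $x$, and $U^{\mathrm{rep}}(x)$ those having a repeated entry. Since $A\in\MB_h$, every $n\leq x$ admits such a multiset, hence $U(x)\geq x$. A multiset with all-distinct entries contributes exactly $h!$ ordered tuples to $T(x)$, while one with a repetition contributes strictly fewer, so
$$T(x)\geq h!\bigl(U(x)-U^{\mathrm{rep}}(x)\bigr)\geq h!\bigl(x-U^{\mathrm{rep}}(x)\bigr).$$
To bound $U^{\mathrm{rep}}(x)$, observe that each repeating multiset has the form $\{d,d,a_3,\ldots,a_h\}$ with $d\in A$ and $d^2 a_3\cdots a_h\leq x$, so $U^{\mathrm{rep}}(x)\leq\sum_{d\in A}T_{h-2}(x/d^2)$ (with the convention $T_0\equiv 1$). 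Inserting the upper bound above with $k=h-2$, where the exponent $k/h-1=-2/h<0$ is crucial, and summing over $d\in A$ yields $U^{\mathrm{rep}}(x)=O\bigl(x/(\log x)^{2/h}\bigr)=o(x)$.

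Combining the two bounds gives $h!\leq(C\Gamma(1/h))^h$, i.e.\ $C\geq(h!)^{1/h}/\Gamma(1/h)$, contradicting the choice of $C$. The main technical difficulty will be controlling $\sum_{d\in A}T_{h-2}(x/d^2)$ for $d$ close to $\sqrt x$, where $\log(x/d^2)$ degenerates; we handle it by splitting the range of summation (say at $d\leq x^{1/4}$) and using crude bounds on the tail. The conceptual content is that representations with repeated factors form a density-$o(1)$ subset, so the ordered count $T(x)$ behaves like $h!$ times the unordered one, and it is precisely this factor of $h!$ that lifts Raikov's constant $1/\Gamma(1/h)$ by $(h!)^{1/h}$.
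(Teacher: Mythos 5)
Your argument is essentially correct, and its conceptual core is exactly the paper's: the statement follows from the stronger bound $\limsup_{x\to\infty}A(x)\log^{1-1/h}x/x\ge\sqrt[h]{h!}/\Gamma(1/h)$ (Theorem \ref{raikov:better}), and the gain of $\sqrt[h]{h!}$ over Raikov comes from observing that every $n$ either has a repeated-factor representation or else contributes at least $h!$ ordered representations, with the repeated-factor contribution being negligible. Where you differ is the machinery. The paper works entirely with Dirichlet series: Lemma \ref{factorial_raikov} encodes your dichotomy as $\frac{1}{h!}A[s]^h+B[s]^{h-1}\ge\zeta(s)$ with $B=A\cup\{a^2:a\in A\}$, kills the error term via $(s-1)B[s]^{h-1}\to0$, and then Lemma \ref{lem:0:infsup} (a one-sided Tauberian inequality) converts the resulting bound on $(s-1)^{1/h}A[s]$ into one on $A(x)$. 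You instead run Raikov's original real-variable convolution for the upper bound on $T_h(x)=\sum_{n\le x}S_{A,h}(n)$ and bound the repeated-factor count $U^{\mathrm{rep}}(x)\le\sum_{d\in A}T_{h-2}(x/d^2)=o(x)$ directly. Both are sound; the Dirichlet-series route buys cleanliness (the awkward endpoint ranges you flag --- $a$ near $x$ in the convolution, $d$ near $\sqrt x$ in $U^{\mathrm{rep}}$ --- simply do not arise, being absorbed into exact identities for $A[s]^h$ and $B[s]^{h-1}$), at the price of the Tauberian transfer step; your route is more elementary and self-contained but must handle those degenerate ranges carefully (your proposed split at $d\le x^{1/4}$ plus a divisor-function bound $T_k(y)\ll y\log^{k-1}(2y)$ on the tail does work, and for $h=2$ the trivial bound $U^{\mathrm{rep}}(x)\le A(\sqrt x)$ suffices).

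One small omission: the corollary asserts the bound $>1/e$, so after establishing $C\ge\sqrt[h]{h!}/\Gamma(1/h)$ you still need the numerical inequality $\sqrt[h]{h!}/\Gamma(1/h)>1/e$ for all $h\ge2$. The paper disposes of this by noting the sequence decreases to $1/e$; a one-line argument is that $h!>(h/e)^h$ gives $\sqrt[h]{h!}>h/e$, while $\Gamma(1/h)=h\,\Gamma(1+1/h)\le h$ since $\Gamma\le1$ on $[1,2]$. You should include some such step to close the proof of the stated corollary.
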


\vskip16pt
For $A_1,\ldots,A_h\subseteq\mathbb{Z}^+\!$, we define the \textit{common multiplicative representation function} as
$$
S_{A_1,\ldots,A_h}(n)=|\{(a_1,\ldots,a_h):a_i\in A_i\text{ and }a_1\cdots a_h=n\}|
$$
with $n\in\mathbb{Z}^+$. Raikov's theorem may now be generalised to these functions as follows.

\begin{tet}\label{mult:sup}
Let $A_1,\ldots,A_h\subseteq\mathbb{Z}^+\!$. Pick $\tau_1,\ldots,\tau _h\in(0,1)$ such that $\displaystyle\sum_{i=1}^h\tau_i=1$, and assume that
$$
\advance\abovedisplayskip by-8pt
\limsup_{x\to\infty}A_i(x)\frac{\log^{1-\tau_i}x}{x}<\infty
$$
for $1\le i \le h$. Then, we have
\[
\left(\liminf_{x\to\infty}\frac{\sum_{n\le x}S_{A_1,\ldots,A_h}(n)}{x}\right)\;\prod_{i=1}^{h}\;\frac{1}{\Gamma(\tau_i)}\;\le\;\prod_{i=1}^{h}\;\limsup_{x\to\infty}A_i(x)\frac{\log^{1-\tau_i}x}{x}.
\]
\end{tet}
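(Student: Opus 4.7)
The plan is to establish the stronger statement
$$
\limsup_{x \to \infty} \frac{T(x)}{x} \;\le\; \prod_{i=1}^h \alpha_i \, \Gamma(\tau_i),
$$
where $T(x) := \sum_{n \le x} S_{A_1, \ldots, A_h}(n)$ and $\alpha_i := \limsup_{x \to \infty} A_i(x) \log^{1-\tau_i}(x)/x$; since $\liminf \le \limsup$, this rearranges to the theorem. The key leverage from the limsup hypothesis is that for any $\epsilon > 0$ there is $x_0 = x_0(\epsilon)$ such that $A_i(t) \le (\alpha_i + \epsilon)\, t / \log^{1-\tau_i} t$ for every $t \ge x_0$ and every $i$. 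Writing
$$
T(x) = \bigl|\{(a_1, \ldots, a_h) \in A_1 \times \cdots \times A_h : a_1 \cdots a_h \le x\}\bigr|,
$$
I would view this as an iterated Stieltjes integral against the counting measures $dA_i$ and apply Abel summation variable by variable: summing over $t_1$ yields $A_1(x/(t_2 \cdots t_h))$, which in the main region is replaced by its upper bound; Abel summation in $t_2$ against the resulting decreasing weight then introduces the bound on $A_2$, and so on. Up to boundary error this gives
$$
T(x) \;\le\; \prod_{i=1}^h (\alpha_i + \epsilon) \int\cdots\int_{\substack{t_i \ge x_0 \\ t_1 \cdots t_h \le x}} \prod_{i=1}^h \frac{dt_i}{\log^{1-\tau_i} t_i}.
$$

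The next step is to evaluate this integral. Substituting $t_i = x^{s_i}$ sends the region of integration (up to a vanishing correction from $x_0$) to the simplex $s_i \ge 0$, $\sum_i s_i \le 1$, with $\log t_i = s_i \log x$ and $dt_i = x^{s_i}(\log x)\, ds_i$. The hypothesis $\sum_i \tau_i = 1$ is exactly what causes the $(\log x)^h$ from the Jacobian to cancel the $(\log x)^{\sum_i(1-\tau_i)} = (\log x)^{h-1}$ from the denominators, leaving a single factor $\log x$ multiplying
$$
\int_{\substack{s_i \ge 0 \\ \sum_i s_i \le 1}} x^{\sum_i s_i} \prod_{i=1}^h s_i^{\tau_i - 1} \, ds_1 \cdots ds_h.
$$
Under the polar change $s_i = u w_i$ with $u = \sum_i s_i \in [0,1]$ and $(w_1,\ldots,w_h)$ on the unit simplex, the $u$-dependence decouples to $\int_0^1 x^u\, du \sim x/\log x$, while the angular integral is the classical Dirichlet integral $\prod_i \Gamma(\tau_i)/\Gamma(\sum_i \tau_i) = \prod_i \Gamma(\tau_i)$. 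Combining, the bound becomes $(1+o(1))\, x \prod_i (\alpha_i + \epsilon) \Gamma(\tau_i)$, and sending $\epsilon \to 0$ finishes the argument.

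The hardest part is the rigorous execution of the iterated Abel summation, since the bound $A_i(t) \le (\alpha_i + \epsilon)\, t / \log^{1-\tau_i} t$ is only valid for $t \ge x_0$ and the weight $1/\log^{1-\tau_i} t_i$ is singular at $t_i = 1$. A clean way around this is to split the integration domain into a main region where $t_i \ge x^{1/(2h)}$ for every $i$ and boundary regions where $t_j < x^{1/(2h)}$ for some $j$; on the main region the asymptotics apply uniformly, while on each boundary region the variable $a_j$ is confined to a set of size $o(x)$ and the remaining $(h-1)$-fold count admits an analogous bound, so the total boundary contribution is $o(x)$, which is negligible compared to the main-region integral of order exactly $x$. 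A possibly cleaner alternative would be induction on $h$ with a generalised statement allowing $\sum_i \tau_i = \sigma \in (0,1]$; each inductive step then reduces to a one-dimensional Abel summation together with the Beta identity $\int_0^1 r^{\tau_h-1}(1-r)^{\sigma-\tau_h-1}\,dr = \Gamma(\tau_h)\Gamma(\sigma-\tau_h)/\Gamma(\sigma)$.
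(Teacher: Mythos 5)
Your strategy is genuinely different from the paper's. The paper never attacks the counting problem directly: it passes to Dirichlet series, uses the identity $\sum_n S_{A_1,\ldots,A_h}(n)n^{-s}=\prod_i A_i[s]$ to turn the $h$-fold multiplicative convolution into a product of one-variable quantities, bounds each factor via $\limsup_{s\searrow 1}(s-1)^{\tau_i}A_i[s]\le\Gamma(\tau_i)\alpha_i$ (Lemma 1), and extracts the conclusion from an Abelian argument relating $\liminf_x x^{-1}\sum_{n\le x}S(n)$ to the behaviour of the Dirichlet series as $s\searrow 1$ --- which is exactly why only a $\liminf$ appears on the left of the theorem. Your direct hyperbola-method computation, if completed, would prove the stronger $\limsup$ bound (and in particular would answer the paper's Problem \ref{prob4}); the Dirichlet-integral evaluation at its core (substitution $t_i=x^{s_i}$, polar decomposition, value $\prod_i\Gamma(\tau_i)/\Gamma(\sum_i\tau_i)$) is correct.

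There is, however, a concrete error in your treatment of what you rightly single out as the hardest part. The region where some $t_j<x^{1/(2h)}$ does \emph{not} contribute $o(x)$. Its contribution is $\sum_{a_j\in A_j,\,a_j\le x^{1/(2h)}}T_{h-1}(x/a_j)$, where $T_{h-1}(y)$, the $(h-1)$-fold count, is $O\big(y/\log^{\tau_j}y\big)$; but partial summation against $A_j(t)=O\big(t/\log^{1-\tau_j}t\big)$ gives $\sum_{a_j\in A_j,\,a_j\le x^{1/(2h)}}a_j^{-1}\asymp\log^{\tau_j}x$, which exactly cancels the logarithmic saving, so this region contributes $\Theta(x)$ --- a positive proportion of the main term, not an error. (Equivalently: after the substitution $t_i=x^{s_i}$, the angular integral $\int_\Delta\prod_i w_i^{\tau_i-1}$ draws a fixed positive fraction of its mass from near each face $w_j=0$ of the simplex, so the slab $s_j\le 1/(2h)$ cannot be discarded.) The repair is to cut at a \emph{fixed} height $x_0=x_0(\epsilon)$, the only place where the hypothesis $A_i(t)\le(\alpha_i+\epsilon)t/\log^{1-\tau_i}t$ fails: there the same computation gives $\sum_{a_j\le x_0}a_j^{-1}=O_\epsilon(1)$ and hence a contribution $O_\epsilon\!\big(x/\log^{\tau_j}x\big)=o(x)$, while the integrable singularities $s_j^{\tau_j-1}$ must then be kept inside the main-term integral rather than excised. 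With that correction, and an honest execution of the iterated Abel summation --- your proposed induction on $h$ with $\sum_i\tau_i=\sigma\in(0,1]$ and the Beta identity is the right way to organize it, since each step only requires integrating a decreasing weight against one $dA_i$ --- the argument goes through, at the cost of considerably more bookkeeping than the paper's generating-function shortcut.
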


\vskip16pt
\def\MC{\mathop{\rm MC}\nolimits}%
Let $A_i\subseteq \mathbb{Z}^+$ for $1\le i\le h$. We shall refer to the $h$-tuple $(A_1,\ldots,A_h)$ as \textit{a multiplicative complement of order $h$} if all positive integers can be written in the form $a_1\cdots a_h$ with $a_i\in A_i$, which we may also express as having $S_{A_1,\ldots,A_h}(n)\ge 1$ for all $n\in \mathbb{Z}^+$. We denote the set of multiplicative complements of order $h$ by $\MC_h$. The following is then a direct consequence of the above theorem.

\begin{kov}
Let $h\in\mathbb{Z}^+\!$ and $(A_1,\ldots, A_h)\in\MC_h$. Pick $\tau_1,\ldots,\tau_h\in(0,1)$ such that $\displaystyle\sum_{i=1}^h\tau_i=1$ and
$$
\advance\abovedisplayskip by-8pt
\limsup_{x\to\infty}A_i(x)\frac{\log^{1-\tau_i}x}{x}<\infty
$$
for $1\le i \le h$. Then, we have
\[
\prod_{i=1}^{h}\;\frac{1}{\Gamma(\tau_i)}\;\le\;\prod_{i=1}^{h}\;\limsup_{x\to\infty}A_i(x)\frac{\log^{1-\tau_i}x}{x}.
\]
\end{kov}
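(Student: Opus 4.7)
The plan is to derive this as an immediate consequence of Theorem~\ref{mult:sup}; the whole proof will amount to identifying the right lower bound on the $\liminf$ that appears on the left-hand side there.

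First I would unpack the hypothesis $(A_1,\ldots,A_h)\in\MC_h$. By definition this says $S_{A_1,\ldots,A_h}(n)\ge 1$ for every $n\in\mathbb{Z}^+$. Summing the inequality over $n\le x$ gives $\sum_{n\le x} S_{A_1,\ldots,A_h}(n)\ge\lfloor x\rfloor$, and after dividing by $x$ and letting $x\to\infty$, I obtain
\[
\liminf_{x\to\infty}\frac{\sum_{n\le x} S_{A_1,\ldots,A_h}(n)}{x}\;\ge\;1.
\]

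Next I would observe that the remaining hypotheses of the corollary ($\tau_i\in(0,1)$, $\sum \tau_i=1$, and $\limsup_{x\to\infty}A_i(x)\log^{1-\tau_i}x/x<\infty$ for each $i$) are exactly the hypotheses required to apply Theorem~\ref{mult:sup}. Substituting the lower bound just established into the conclusion of that theorem eliminates the $\liminf$ factor on the left-hand side and yields precisely
\[
\prod_{i=1}^h\frac{1}{\Gamma(\tau_i)}\;\le\;\prod_{i=1}^h\limsup_{x\to\infty}A_i(x)\frac{\log^{1-\tau_i}x}{x},
\]
which is the claim.

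There is no genuine obstacle here; the corollary is a direct bookkeeping specialisation of Theorem~\ref{mult:sup} to the setting where the representation counts are uniformly bounded below by $1$. All of the analytic content sits in Theorem~\ref{mult:sup}, and the only step beyond quoting it is the trivial lower bound on the Cesàro average described above.
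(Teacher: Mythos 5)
Your proof is correct and is exactly the argument the paper intends: the paper states this corollary as "a direct consequence" of Theorem~\ref{mult:sup}, and the only content beyond quoting that theorem is the observation that $S_{A_1,\ldots,A_h}(n)\ge 1$ forces the Ces\`aro $\liminf$ to be at least $1$, which you supply.
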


\vskip16pt
This inequality is sharp.

\begin{tet}\label{precision:bound:2}
Let $h\in\mathbb{Z}^+\!$. Pick $\tau_1,\ldots,\tau_h\in(0,1)$, $a_1,\ldots,a_h\in(0,\infty)$ with $\displaystyle\sum_{i=1}^h\tau_i=1$ and $\displaystyle\prod_{i=1}^ha_i=1$. Then, there exists $(A_1,\ldots,A_h)\in\MC_h$ such that
\[
\lim_{x\to\infty} A_i(x)\frac{\log^{1-\tau_i}x}{x}=\frac{a_i}{\Gamma(\tau_i)}
\]
for all $1\le i\le h$.
\end{tet}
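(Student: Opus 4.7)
My plan is to realise the $A_i$'s through a partition of the primes. Specifically, I would seek a partition $\mathbb{P}=P_1\sqcup P_2\sqcup\cdots\sqcup P_h$ satisfying the strong density property
\[
\sum_{\substack{p\in P_i\\ p\le x}}\frac{1}{p}=\tau_i\log\log x+\gamma_i+o(1)\quad(1\le i\le h)
\]
for suitable constants $\gamma_i$, and then set $A_i=\{n\in\ZZ^+:p\mid n\Rightarrow p\in P_i\}$. That $(A_1,\ldots,A_h)\in\MC_h$ is automatic from unique factorisation: for any $n=\prod_p p^{v_p(n)}$, putting $n_i=\prod_{p\in P_i}p^{v_p(n)}$ gives $n_i\in A_i$ and $n=n_1\cdots n_h$.

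\textbf{Asymptotic counting via Selberg--Delange.} The Dirichlet series of $A_i$ factorises, near $s=1$, as
\[
\sum_{n\in A_i}\frac{1}{n^s}=\prod_{p\in P_i}\frac{1}{1-p^{-s}}=\zeta(s)^{\tau_i}\cdot G_i(s),\qquad G_i(s)=\prod_p\bigl(1-p^{-s}\bigr)^{\tau_i-\mathbf{1}_{p\in P_i}},
\]
and $G_i$ is analytic in a neighbourhood of $s=1$ thanks to the density condition. A Selberg--Delange (or Wirsing) tauberian theorem then yields
\[
A_i(x)\sim\frac{G_i(1)}{\Gamma(\tau_i)}\cdot\frac{x}{\log^{1-\tau_i}x}.
\]
A direct computation gives $\prod_{i=1}^h G_i(1)=\prod_p(1-p^{-1})^{\sum_i(\tau_i-\mathbf{1}_{p\in P_i})}=1$, so the hypothesis $\prod_i a_i=1$ is exactly the right constraint.

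\textbf{Matching the constants (the hard part).} What remains is to choose the partition so that $G_i(1)=a_i$ for each $i$. The basic handle is that reassigning a single prime $p$ from $P_i$ to $P_j$ multiplies $G_i(1)$ by $(1-p^{-1})$ and $G_j(1)$ by $(1-p^{-1})^{-1}$, leaving all other $G_k(1)$ untouched. Starting from a skeleton partition with the correct densities (for example, a deterministic round-robin weighted by the $\tau_i$), I would successively modify the class of individual primes to drive the partial products $\prod_{p\le N}(1-p^{-1})^{\tau_i-\mathbf{1}_{p\in P_i}}$ towards $a_i$, always using balanced groups of reassignments that preserve the densities $\tau_i$. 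The chief obstacle is pushing this iterative tuning to converge to the \emph{exact} target vector rather than merely approximating it; I expect the argument to rest on (i) the divergence of $\sum_p 1/p$, which supplies arbitrarily small adjustments $\log(1-p^{-1})$ from large primes so that the achievable set of $(\log G_1(1),\ldots,\log G_h(1))$ is dense in the hyperplane $\{v\in\RR^h:\sum v_i=0\}$, and (ii) a careful two-scale limiting argument where the bulk of the primes fixes the densities $\tau_i$ while a sparse "fine-tuning" layer of reassignments of larger and larger primes converts density into exact equality.
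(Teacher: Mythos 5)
Your overall strategy coincides with the paper's: partition the primes into classes $P_i$ of relative density $\tau_i$, let $A_i$ consist of the integers all of whose prime factors lie in $P_i$ (so membership in $\MC_h$ is indeed immediate from unique factorisation), apply a mean-value theorem for multiplicative functions, and tune the Euler-product constants so that $G_i(1)=a_i$. Two steps are genuinely incomplete, however. First, the density hypothesis you impose, $\sum_{p\in P_i,\,p\le x}1/p=\tau_i\log\log x+\gamma_i+o(1)$, is too weak for the Tauberian step as you invoke it: it gives convergence of $\log G_i(s)$ at $s=1$ but not analytic continuation of $G_i$ to a neighbourhood of $s=1$ (that would require power-saving cancellation in $\sum_{p\le x}(\mathbf{1}_{p\in P_i}-\tau_i)$), so Selberg--Delange does not apply as stated; the condition does not even force $P_i(x)\sim\tau_i\pi(x)$. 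The paper instead demands $P_i(x)=\tau_i\frac{x}{\log x}+O\bigl(\frac{x}{\log^2x}\bigr)$ and invokes the Wirsing--Odoni theorem, whose hypothesis is exactly this quantitative count at the primes; your round-robin skeleton would satisfy this, but the reduction must be stated with the stronger condition.

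Second, the constant-matching step that you yourself flag as the ``chief obstacle'' is the actual content of the proof, and density of the achievable set of $(\log G_1(1),\ldots,\log G_h(1))$ is not enough --- you must attain the prescribed value exactly in the limit while keeping the counting function rigid. The paper does this (Lemma 3) by building $R\subseteq Q$ greedily so that $R(x)=\frac{\tau}{\kappa}Q(x)+O(1)$, selecting in each block of $N$ consecutive primes a pair $r_k<s_k$ with $r_k\in R$, $s_k\in Q\setminus R$, and setting $q_k=\log\frac{r_k}{r_k-1}-\log\frac{s_k}{s_k-1}>0$. One first discards or adjoins finitely many primes to land within $\frac12\sum_{k\ge M}q_k$ of the target, then chooses signs $f(k)\in\{-1,1\}$ and swaps $r_k$ for $s_k$ exactly when $f(k)=-1$. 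Exactness rests on the classical fact (P\'olya--Szeg\H{o}) that if $b_k>0$, $\sum b_k<\infty$ and $b_k\le\sum_{n>k}b_n$, then every $t\in[0,\sum b_n]$ is an exact subsum; verifying $q_k\le\sum_{n>k}q_n$ in turn needs a prime-gap estimate (primes in $[x,x+x^{0.6}]$) to bound $q_k$ above by $2N/r_k^{1.4}$ and the abundance of primes in $[r_n,2r_n]$ to bound the tail below. Because every swap stays inside one block, $|R''(x)-R(x)|=O(1)$ and the density $\tau$ is preserved. Without an argument supplying this exact interval-filling property, your two-scale plan remains a sketch rather than a proof.
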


\vskip16pt
A simple corollary to these theorems may now be formulated.

\begin{kov}\label{preicision:mult:comp}
Let $h\in\mathbb{Z}^+\!$. Then:
\begin{enumerate}
\item For all $(A_1,\ldots,A_h)\in\MC_h$, we have $\displaystyle\limsup_{x\to\infty}\max\{A_1(x),\ldots,A_h(x)\}\frac{\log^{1-\frac{1}{h}} x}{x}\ge\frac{1}{\Gamma\left(\frac{1}{h}\right)}$.{\parfillskip=43pt\par}
\item There is $(A_1,\ldots,A_h)\in\MC_h$ for which $\displaystyle\lim_{x\to\infty}\max\{A_1(x),\ldots,A_h(x)\}\frac{\log^{1-\frac{1}{h}}x}{x}=\frac{1}{\Gamma\left(\frac{1}{h}\right)}$.
\end{enumerate}
\end{kov}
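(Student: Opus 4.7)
The plan is to read off both parts of the corollary as direct applications of the two preceding results, specialised to the symmetric choice $\tau_1=\cdots=\tau_h=1/h$. Once this specialisation is made, the only remaining issue is to convert a statement about the \emph{product} of $\limsup$'s into one about the $\limsup$ of the \emph{maximum}; this is handled by the geometric mean inequality together with the elementary fact that $\limsup_x \max_i f_i(x)\ge \max_i \limsup_x f_i(x)$.

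For part (1), I set $f_i(x)=A_i(x)\frac{\log^{1-1/h}x}{x}$. I first dispatch the case where some $\limsup_x f_i(x)=\infty$: then $\max_j f_j(x)\ge f_i(x)$ pointwise, so $\limsup_x \max_j f_j(x)=\infty$ and the bound is trivial. Otherwise all $\limsup_x f_i(x)$ are finite, and the hypotheses of the Corollary to Theorem~\ref{mult:sup} are met with $\tau_i=1/h$. That Corollary gives
\[
\frac{1}{\Gamma(1/h)^h}\;\le\;\prod_{i=1}^{h}\limsup_{x\to\infty} f_i(x).
\]
Taking $h$-th roots and bounding the geometric mean by the maximum, I obtain $\Gamma(1/h)^{-1}\le \max_i\limsup_x f_i(x)\le \limsup_x \max_i f_i(x)$, which is exactly the desired inequality.

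For part (2), I apply Theorem~\ref{precision:bound:2} with the symmetric choice $\tau_i=1/h$, $a_i=1$ for every $i$ (both constraints $\sum\tau_i=1$ and $\prod a_i=1$ are satisfied). This yields an $h$-tuple $(A_1,\ldots,A_h)\in\MC_h$ with $\lim_{x\to\infty}f_i(x)=1/\Gamma(1/h)$ for each $i$. Since each $f_i$ converges to the \emph{same} limit $L=1/\Gamma(1/h)$, a routine $\varepsilon$-argument gives $\lim_{x\to\infty}\max_i f_i(x)=L$, which is the claim.

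I do not foresee any genuine obstacle; the corollary is essentially a bookkeeping consequence of Theorem~\ref{mult:sup} and Theorem~\ref{precision:bound:2}. The only point that deserves explicit mention is the comparison between $\limsup\max$ and $\max\limsup$, which goes in the favourable direction for part (1) and causes no loss in part (2) because there the individual sequences actually converge (so $\limsup=\lim$ and $\limsup\max=\lim\max$).
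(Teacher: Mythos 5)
Your proposal is correct and matches the paper's intent exactly: the paper offers no written proof, presenting the statement as an immediate consequence of the corollary to Theorem~\ref{mult:sup} and of Theorem~\ref{precision:bound:2} with the symmetric parameters $\tau_i=1/h$, $a_i=1$, which is precisely your argument. Your explicit handling of the infinite-$\limsup$ case and of the passage from the product of $\limsup$'s to $\limsup\max$ via the geometric mean fills in the routine details the paper leaves implicit.
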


\vskip16pt
It is easy to see that if $(A_1,\dots A_h)\in\MC_h$, then we necessarily have $A_1\cup\cdots\cup A_h\in\MB_h$, and hence:

\begin{kov}
With $h\in\mathbb{Z}^+$, there exists $A\in\MB_h$ such that
\[
\limsup_{x\to\infty}A(x)\frac{\log^{1-\frac{1}{h}}x}{x}=\frac{h}{\Gamma\left(\frac{1}{h}\right)}.
\]
\end{kov}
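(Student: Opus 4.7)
The plan is to derive this from Theorem~\ref{precision:bound:2} together with the observation, stated just before the corollary, that the union of any $h$-tuple in $\MC_h$ belongs to $\MB_h$. Concretely, I would apply Theorem~\ref{precision:bound:2} with the symmetric choice $\tau_1=\cdots=\tau_h=1/h$ and $a_1=\cdots=a_h=1$ (admissible, since $\sum_i\tau_i=1$ and $\prod_i a_i=1$), obtaining an $h$-tuple $(A_1,\ldots,A_h)\in\MC_h$ with
\[
\lim_{x\to\infty}A_i(x)\frac{\log^{1-1/h}x}{x}=\frac{1}{\Gamma(1/h)}\qquad(1\le i\le h).
\]
Setting $A:=A_1\cup\cdots\cup A_h$, the cited observation gives $A\in\MB_h$ at once.

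The trivial inequality $A(x)\le A_1(x)+\cdots+A_h(x)$ yields the upper bound $\limsup_{x\to\infty}A(x)\log^{1-1/h}x/x\le h/\Gamma(1/h)$. For the matching lower bound, I would use that the construction in the proof of Theorem~\ref{precision:bound:2} can be arranged so that the $A_i$'s are essentially pairwise disjoint, i.e.\ that $|A_i\cap A_j\cap\{1,\ldots,x\}|=o(x/\log^{1-1/h}x)$ for $i\ne j$. Granted this, $A(x)=(1-o(1))\sum_i A_i(x)$, and the desired equality follows; in fact the $\limsup$ is then a genuine limit.

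The main obstacle is precisely this near-disjointness. I expect it to be automatic for any natural construction — for instance, if each $A_i$ is carved out by a congruence condition on $\Omega(n)$ or by a canonical factorisation coordinate — but if the proof of Theorem~\ref{precision:bound:2} does not directly deliver disjoint sets, a small modification (reassigning each overlapping integer to a single chosen $A_i$ and verifying that the tuple remains in $\MC_h$ with unchanged asymptotic densities) would close the gap.
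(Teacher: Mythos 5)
Your proposal is correct and follows the same route the paper intends: apply Theorem~\ref{precision:bound:2} with $\tau_i=1/h$, $a_i=1$ and take the union, using the observation that unions of multiplicative complements are multiplicative bases. The near-disjointness you flag as the only potential obstacle is automatic in the paper's construction, since there each $A_i$ consists of the integers all of whose prime factors lie in $P_i$ with the $P_i$ pairwise disjoint sets of primes, so $A_i\cap A_j=\{1\}$ for $i\ne j$ and the lower bound (indeed the limit) follows exactly as you describe.
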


\vskip16pt
One can readily check that $\frac{h}{\Gamma(\frac{1}{h})}=\frac{1}{\Gamma(1+\frac{1}{h})}$ and $\displaystyle\min_{h\ge 2}\Gamma\Big(1+\frac{1}{h}\Big)=\frac{\sqrt{\pi}}{2}$. This yields the next statement.
\begin{kov}

With $h\ge 2$, there exists $A\in\MB_h$ such that
\[
\limsup_{x\to\infty}A(x)\frac{\log^{1-\frac{1}{h}}x}{x}=\frac{2}{\sqrt{\pi}}.
\]
\end{kov}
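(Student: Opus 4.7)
The corollary is an immediate consequence of the preceding one, specialised to $h=2$. The two observations made just above the statement carry the whole argument: the identity $h/\Gamma(1/h)=1/\Gamma(1+1/h)$ (which is just the functional equation $\Gamma(z+1)=z\Gamma(z)$ at $z=1/h$) rewrites the previous corollary's bound in the form $1/\Gamma(1+1/h)$, and the minimum claim $\min_{h\ge 2}\Gamma(1+1/h)=\sqrt{\pi}/2$ pinpoints $h=2$ as the extremal value, at which this rewritten bound equals $2/\Gamma(1/2)=2/\sqrt{\pi}$. Applying the previous corollary at $h=2$ then supplies the required $A\in\MB_2\subseteq\bigcup_{h\ge 2}\MB_h$ satisfying $\limsup_{x\to\infty}A(x)\log^{1/2}x/x=2/\sqrt{\pi}$.

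The only nontrivial point is the verification of the minimum statement $\min_{h\ge 2}\Gamma(1+1/h)=\sqrt{\pi}/2$, which I would handle as a short calculus exercise. Recall that $\Gamma$ has a unique positive minimiser $x^{\ast}\approx 1.4616$ on $(0,\infty)$, is decreasing on $(0,x^{\ast})$, and is increasing on $(x^{\ast},\infty)$. For integer $h\ge 2$, the argument $1+1/h$ equals $3/2$ when $h=2$ and lies in $(1,4/3]$ when $h\ge 3$. Since $3/2>x^{\ast}>4/3$, the smallest value of $\Gamma(1+1/h)$ among $h\ge 3$ is $\Gamma(4/3)$, attained at $h=3$ (the argument closest to $x^{\ast}$ from below, where $\Gamma$ is decreasing). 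A direct comparison gives $\Gamma(3/2)=\sqrt{\pi}/2\approx 0.8862<0.8929\approx\Gamma(4/3)$, so the overall minimum is attained at $h=2$ with value $\sqrt{\pi}/2$, as claimed.

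Apart from this elementary calculus check, there is no further obstacle: once the previous corollary is granted, the proof is a one-line specialisation at the distinguished value $h=2$, and the ``with $h\ge 2$'' phrasing is honoured since this $h$ does satisfy $h\ge 2$.
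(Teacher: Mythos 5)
Your proposal is correct and is exactly the argument the paper intends: the corollary is the $h=2$ specialisation of the preceding corollary, combined with the identity $\Gamma(1+\tfrac1h)=\tfrac1h\Gamma(\tfrac1h)$ and the check that $\min_{h\ge2}\Gamma(1+\tfrac1h)=\Gamma(\tfrac32)=\sqrt{\pi}/2$ (the paper offers no further proof beyond the remark preceding the statement). Your calculus verification of the minimum, via the location of $\Gamma$'s minimiser between $4/3$ and $3/2$ and the comparison $\Gamma(3/2)<\Gamma(4/3)$, is a correct filling-in of the paper's ``one can readily check.''
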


\vskip16pt
In 2018, Pach and S\' andor also proved density bounds for the limit inferior.

\begin{tet}[{\cite[Th. 4]{pach2018multiplicative}}]\label{PS}
Let $h\in\mathbb{Z}^+\!$. Then:
\begin{enumerate}
\item For all $A\in\MB_h$, we have $\displaystyle\liminf\limits_{x\to \infty}\frac{A(x)}{\frac{x}{\log x}}>1$.
\item For all $\varepsilon>0$, there exists $A\in\MB_h$ with $\displaystyle\liminf_{x\to\infty}\frac{A(x)}{\frac{x}{\log x}}\le 1+\varepsilon$.
\end{enumerate}
\end{tet}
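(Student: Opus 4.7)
For part (1), my plan is a proof by contradiction. Assume some $A\in\MB_h$ satisfies $\liminf_{x\to\infty}A(x)\log x/x=1$: then there is a sequence $x_k\to\infty$ along which $A\cap[1,x_k]$ exceeds the primes $\le x_k$ by only $o(\pi(x_k))$ elements. I would then focus on the squarefree integers $n\le x_k$ with exactly $h+1$ distinct prime factors -- of which there are, by Landau's formula, asymptotically $c_h x_k(\log\log x_k)^h/\log x_k$. Every representation $n=a_1\cdots a_h$ with $a_i\in A$ must pack $h+1$ primes into $h$ slots, so at least one $a_i$ is a composite divisor of $n$. Double-counting the pairs $(n,c)$ with $c$ a forced composite, one obtains a lower bound on the number of composites in $A\cap[1,x_k]$; arranged correctly, this bound is $\gg\pi(x_k)$, contradicting the assumption.

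The main obstacle here is overcounting: a small composite can be the forced divisor for many $n$'s, so a naive count forces only $\gg(\log\log x_k)^h$ composites, which is far weaker than $\pi(x_k)$. The fix is to restrict attention to integers $n$ whose prime factors lie in a prescribed narrow (e.g., dyadic) range, so that the required composite divisors also lie in a narrow range and are essentially unique to each $n$; one then sums the resulting local counts over the dyadic scales.

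For part (2), given $\varepsilon>0$, I would build $A$ in blocks. Choose a very rapidly growing sequence $N_1<N_2<\cdots$; at step $k$ add a small set $F_k\subseteq(N_{k-1},N_k]$ of composites so that $A_k:=\{1\}\cup\{\text{primes}\le N_k\}\cup\bigcup_{j\le k}F_j$ is a multiplicative basis of order $h$ for $[1,N_k]$; let $A=\bigcup_kA_k$, which is then in $\MB_h$ by construction. At $x=N_{k-1}$ (strictly between two blocks of additions) one has
\[
\frac{A(N_{k-1})}{N_{k-1}/\log N_{k-1}}=1+\frac{\bigl|\bigcup_{j<k}F_j\bigr|}{\pi(N_{k-1})},
\]
so everything reduces to bounding $\bigl|\bigcup_{j<k}F_j\bigr|$.

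The main obstacle is the underlying finitary claim: \emph{for every $\delta>0$ and all sufficiently large $M$, there is a set $F$ of composites $\le M$ with $|F|\le\delta\pi(M)$ such that $\{1\}\cup\{\mathrm{primes}\le M\}\cup F$ is a multiplicative basis of order $h$ for $[1,M]$.} I would prove this by a covering design: to each $n\le M$ with more than $h$ prime factors, assign a composite divisor $d(n)$ in a coordinated way (say, based on the dyadic location of $n$'s largest prime factors), and take $F=\{d(n)\}$. A Mertens-type count of the image $\{d(n)\}$ shows that the divisors massively repeat, giving $|F|=o(\pi(M))$. Granted the claim, apply it with $\delta=\varepsilon/2^k$ at $M=N_k$; choosing the $N_k$ to grow fast enough (so that $\pi(N_{k-1})\ll\pi(N_k)$ and the earlier $|F_j|$ become negligible against $\pi(N_{k-1})$), the cumulative $\bigl|\bigcup_{j<k}F_j\bigr|$ stays below $\varepsilon\pi(N_{k-1})$, and the $\liminf$ is $\le 1+\varepsilon$ along $x=N_{k-1}$.
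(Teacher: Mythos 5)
First, a point of reference: the paper does not prove Theorem \ref{PS} at all --- it is quoted from \cite{pach2018multiplicative} --- so the only in-paper material to compare with is the block construction in the proof of Theorem \ref{liminf:max}(2), which adapts the Pach--S\'andor construction for part (2).

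Your plan for part (1) cannot work in the form described. The lower bound you aim to extract (``$A$ contains $\gg\pi(x_k)$ composites below $x_k$'') uses the hypothesis $\liminf=1$ only to \emph{select} the scales $x_k$; the combinatorial count itself applies to every $A\in\MB_h$ at every large $x$. If it really gave $\gg\pi(x)$ composites with an implied constant depending only on $h$, you would have proved $\liminf A(x)\log x/x\ge 1+c_h$ uniformly, which is refuted by part (2) of the same theorem: for every $\varepsilon>0$ there is a basis with only $O(\varepsilon\,\pi(N_k))$ composites below $N_k$ along some sequence $N_k\to\infty$. Concretely, the dyadic localisation does not remove the overcounting: even inside one cell $p_i\in(T_i,2T_i]$ with $\prod T_i\approx x$, a composite $c=\prod_{i\in S}p_i$ serves as the forced divisor for about $(x/c)\big/\prod_{i\notin S}\log T_i$ members of the family, which is bounded only when $c\ge x/\log^{O(1)}x$; almost all $n$ in the family can be covered by a handful of small composites such as $6$, and no contradiction with ``$o(\pi(x_k))$ composites'' follows. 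A correct proof must feed the thinness hypothesis into the counting itself (for instance by comparing two scales at which $A$ is thin), not invoke it only at the end.

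For part (2) the architecture (blocks $F_k\subseteq(N_{k-1},N_k]$, evaluation at block boundaries, geometrically decaying $\delta_k$) is exactly the paper's, but your ``underlying finitary claim'' is stated in the wrong, absolute form. Applying it independently at $M=N_k$ yields sets $F^{(k)}$ that inevitably contain many elements below $N_{k-1}$ (e.g.\ every composite $m\le M^{1/h}$ arises as a canonical cofactor $d(n)$), so $\bigcup_k F^{(k)}\cap[1,N_{k-1}]$ swamps $\pi(N_{k-1})$ and the liminf computation collapses. What you need is the \emph{relative} statement: assuming $[1,x]$ is already covered, one can cover $[1,y]$ by adjoining only $O(\varepsilon\, y/\log y)$ new composites, all lying in $(x,y]$. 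There your coordinated-divisor idea meets the real difficulty: when $d(n)\le x$ you cannot add it, and must instead glue the large prime factor $p$ of $n$ to a small piece $v_1$ of the already-available representation of $n/p$; the naive count of the glued composites $pv_1$ is $\sum_{v_1\le\sqrt{x}}\pi(y/v_1)\asymp(y/\log y)\log x$, too large by a factor $\log x$. The calibration that kills this logarithm is precisely the paper's $F_1$ (restricting to the few primes $p>y^{2/3}$) and $F_2$ (restricting to $v_1\le\sqrt{y/p}$, so the count becomes $(y/\log y)\sum_j j^{-3/2}$); your sketch omits this step, which is the heart of the construction.
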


\vskip16pt
For multiplicative complements, we may formulate the analogue as follows.

\begin{tet}\label{liminf:max}
Let $h\in\mathbb{Z}^+\!$. Then:
\begin{enumerate}
\item For all $(A_1, A_2,\ldots,A_h)\in\MC_h$, we have $\displaystyle\liminf\limits_{x\to\infty} \frac{\max\{A_1(x),\ldots,A_h(x)\}}{\frac{x}{\log x}}>\frac{1}{h}$.
\item For all $\varepsilon>0$, there exists $(A_1,\ldots,A_h)\in\MC_h$ with $\displaystyle\liminf_{x\to\infty}\frac{\max\{A_1(x),\ldots,A_h(x)\}}{\frac{x}{\log x}}\le\frac{1}{h}+\varepsilon$.
\end{enumerate}
\end{tet}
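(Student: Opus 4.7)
This follows from Theorem~\ref{PS} by pigeonhole. As noted earlier in the excerpt, if $(A_1,\dots,A_h)\in\MC_h$ then $B:=A_1\cup\cdots\cup A_h\in\MB_h$, so Theorem~\ref{PS}(1) gives $\liminf_{x\to\infty} B(x)/(x/\log x)>1$. On the other hand $B(x)\le A_1(x)+\cdots+A_h(x)\le h\max_i A_i(x)$, so $\max_i A_i(x)\ge B(x)/h$. Dividing through by $x/\log x$ and taking the liminf then yields the claimed strict bound $>1/h$.

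\textbf{Part (2).} Here the plan is to adapt Pach and S\'andor's construction from Theorem~\ref{PS}(2) and distribute elements across the $h$ complements. Fix $\varepsilon>0$ and pick a rapidly growing sequence $x_1\ll x_2\ll\cdots$ along which we aim to achieve the bound. Partition the primes into $h$ nearly equal classes $P_1,\dots,P_h$ (constructing the partition inductively on the windows $(x_{k-1},x_k]$ so that $|P_i\cap[1,x_k]|=(1/h+o(1))\pi(x_k)$ for every $k$) and include $P_i\subseteq A_i$. Then, stage by stage, add a small set of composites $C_i^{(k)}\subseteq(x_{k-1},x_k]$ to each $A_i$, chosen greedily so that after stage $k$ every $n\le x_k$ is representable as $n=a_1\cdots a_h$ with $a_i\in A_i\cap[1,x_k]$. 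If at each stage one can keep $|C_i^{(k)}|$ small enough that $A_i(x_k)\le(1/h+\varepsilon)\,x_k/\log x_k$, the resulting $(A_1,\dots,A_h)\in\MC_h$ will have the claimed property.

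The main obstacle is controlling the added composites. For the $\MB_h$ analogue of Theorem~\ref{PS}(2), Pach and S\'andor show that $o(x_k/\log x_k)$ composites suffice overall, but splitting the primes into $h$ classes creates many \emph{inconvenient} integers --- for instance, semiprimes $p_1p_2\le x$ whose two prime factors happen to lie in the same class $P_j$ --- that admit no representation from the partitioned primes alone and must be closed by an explicit composite in some $A_i$. A naive accounting inflates the required number of composites by a $\log\log x_k$ factor. Overcoming this will require either a more structured choice of partition (arranged so that integers with factorisations ``concentrated'' in one class are rare along the subsequence $x_k$), or a cascading construction in which composites introduced at one scale already provide the missing factorisations at larger scales, amortising their cost over the sparse sequence $x_k$. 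With the $x_k$ chosen sufficiently sparse, one expects a final budget of at most $\varepsilon x_k/(h\log x_k)$ extra composites per class, which is enough to conclude.
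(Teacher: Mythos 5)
Your Part (1) is correct and is exactly the paper's argument: pass to $B=A_1\cup\cdots\cup A_h\in\MB_h$, apply Theorem~\ref{PS}(1), and use $B(x)\le h\max_i A_i(x)$.

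Part (2), however, is not a proof: you state a plan, then explicitly name an obstacle (the integers whose factorisations concentrate in a single prime class, e.g.\ semiprimes $p_1p_2$ with both factors in the same $P_j$) and leave it unresolved, offering only two speculative ways one ``expects'' to fix it. That obstacle is real and, within your framework, fatal to the budget: if every prime is assigned to exactly one class and $A_i\supseteq P_i$ carries only its own primes, then a positive proportion of the roughly $x\log\log x/\log x$ semiprimes up to $x$ need an extra composite placed in some $A_i$, which already exceeds the allowed $\varepsilon x/\log x$ slack. The paper's construction avoids this entirely by \emph{not} partitioning all the primes. At each scale $y=n_{i+1}$ only the primes in the top window $(y/N,y]$ (with $N\approx 256/\varepsilon^2$) are split into $h$ classes $F_3^1,\dots,F_3^h$ according to $\pi(p)\bmod h$; every other new element --- the interval $F_0=(x,xy^{2/3}]$, the products $F_1$, $F_2$ of medium primes with small cofactors, and in particular all primes up to $y/N$ --- is placed into \emph{every} $A_j$. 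The point is that an integer $m\le y$ has at most one prime factor exceeding $y/N$ (two would multiply to more than $y$ once $y>N^2$), and for such $m=pv$ the cofactor satisfies $v\le N$, which lies in all the classes; so no cross-class clashes ever arise. Moreover one only needs a two-factor representation $m=st$ with $s\in A_j$, $t\in A_k$, $j\ne k$, since $1$ belongs to every $A_j$ and fills the remaining $h-2$ slots. The count then closes because $|F_3^j|\le 1+\pi(y)/h$ while $|F_0|+|F_1|+|F_2|$ and the shared small primes contribute only $O(\varepsilon)\,y/\log y$, and the $\le(1/h+\varepsilon)$ bound is only required along the sparse subsequence $n_i$, which is all a $\liminf$ statement needs. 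To repair your write-up you would have to replace the global prime partition with this ``split only the largest primes at each scale, share everything else'' device (or supply a genuinely new idea for the amortisation you allude to); as written, the argument has a gap exactly where you say it does.
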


\vskip16pt
Let us, finally, propose some problems for further research. Note that for $h\ge 2$, we have
$$
\mkern14mu\frac{1}{e}<\frac{\sqrt[h]{h!}}{h\Gamma\left(1+\frac{1}{h}\right)}\le\inf_{A\in\MB_h}\bigg\{\limsup_{x\to\infty}\frac{A(x)}{\frac{x}{\log^{1-\frac{1}{h}}x}}\bigg\}\le\frac{1}{\Gamma\left(1+\frac{1}{h}\right)}\le\frac{2}{\sqrt{\pi}}.
$$

\begin{prob}\label{prob1}
Is it true that for all $h\ge 2$ and $A\in\MB_h$, we have
$$
\limsup_{x\to\infty}A(x)\frac{\log^{1-\frac{1}{h}}x}{x}\ge\frac{1}{\Gamma\left(1+\frac{1}{h}\right)}\,?
$$
\end{prob}

As answering this question seems to be hard, we shall simplify it. Note that for $A\in\MB_2$, we have
$$
\frac{\sqrt{2}}{\sqrt{\pi}}\le\limsup_{x\to \infty}\frac{A(x)}{\frac{x}{\sqrt{\log x}}}.
$$

\begin{prob}\label{prob2}
Is it true that there exists $\delta >0$ such that for all $A\in\MB_2$, we have
$$
\frac{\sqrt{2}}{\sqrt{\pi }}+\delta\le\limsup_{x\to\infty}\frac{A(x)}{\frac{x}{\sqrt{\log x}}}\,?
$$
\end{prob}\label{prob3}

Picking now $A\in\MB_2$, obviously, $S_{A,2}(n)\ge 2$ holds as long as $n\in\mathbb{Z}^+\!$ is not a perfect square, and so $\displaystyle\sum_{n\le x}S_{A,2}(n)\ge x-\lfloor\sqrt{x}\rfloor$. In particular,
$$
\advance\abovedisplayskip by-16pt
\liminf _{x\to \infty}\frac{\sum_{n\le x}S_{A,2}(n)}{x}\ge 2.
$$
Thus, according to Theorem \ref{mult:sup}, we can further reduce Problem \ref{prob2} to the following.

\begin{prob}
Is it true that there exists $\delta _0>0$ such that for all $A\in\MB_2$, we have
$$
\liminf_{x\to\infty}\frac{\sum_{n\le x}S_{A,2}(n)}{x}\ge 2+\delta _0\,?
$$
\end{prob}

Concluding, note how Theorem \ref{mult:sup} claims $\liminf$ to be finite. As for $\limsup$,
the analogue is:

\begin{prob}\label{prob4}
Let $h\in\mathbb{Z}^+$ and $(A_1,\ldots,A_h)\in\MC_h$. Pick $\tau_1,\ldots,\tau_h\in(0,1)$ such that $\displaystyle \sum_{i=1}^h\tau_i=1$ and
$$
\advance\abovedisplayskip by-8pt
\limsup_{x\to\infty}A_i(x)\frac{\log^{1-\tau_i}x}{x}<\infty
$$
for $1\le i \le h$. Is it true that
\begin{equation}
\limsup_{x\to \infty}\frac{\sum_{n\le x}S_{A_1,\dots ,A_h}(n)}{x}<\infty\,?
\end{equation}
\end{prob}

\section{Proofs}

In what follows, for $A\subseteq \mathbb{Z}^+$, we write $A[s]= \sum_{a\in A}\frac{1}{a^s}$, where $s>1$.

The proof of Theorem \ref{raikov:better} and that of Theorem \ref{mult:sup} are based on the next lemma, coming from Raikov's paper yet not explicitly stated there.

\begin{lem}\label{lem:0:infsup}
Choose $A\subseteq\mathbb{Z}^+\!$ and $\tau \in(0,1)$ such that $\displaystyle\limsup_{x\to\infty}A(x)\frac{\log^{1-\tau }x}{x}<\infty$. Then,
\[
\limsup_{s\searrow1}(s-1)^{\tau}A[s]\le\Gamma(\tau)\limsup_{x\to\infty}A(x)\frac{\log^{1-\tau}x}{x}.
\]
\end{lem}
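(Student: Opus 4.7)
The plan is to pass from the discrete sum defining $A[s]$ to a Dirichlet-type integral via Abel summation, bound $A(x)$ asymptotically by the limsup constant, and then use a change of variables of the shape $u=(s-1)\log x$ to recognise a Gamma integral.

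More concretely, write $C=\limsup_{x\to\infty}A(x)\frac{\log^{1-\tau}x}{x}$ and fix $\varepsilon>0$. By Abel summation, for every $s>1$,
\[
A[s]=\sum_{a\in A}a^{-s}=s\int_{1}^{\infty}A(x)x^{-s-1}\,dx,
\]
the boundary term vanishing because $A(N)N^{-s}\le N^{1-s}\to 0$. Choose $X_0$ so large that $A(x)\le (C+\varepsilon)\frac{x}{\log^{1-\tau}x}$ for $x\ge X_0$, and split
\[
A[s]=s\int_{1}^{X_0}A(x)x^{-s-1}\,dx+s\int_{X_0}^{\infty}A(x)x^{-s-1}\,dx=:I_1(s)+I_2(s).
\]
Using the trivial estimate $A(x)\le x$, the first piece satisfies $I_1(s)\le s\log X_0$, so $(s-1)^{\tau}I_1(s)\to 0$ as $s\searrow 1$.

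For the main term, apply the bound on $A(x)$:
\[
I_2(s)\le s(C+\varepsilon)\int_{X_0}^{\infty}\frac{dx}{x^{s}\log^{1-\tau}x}.
\]
Now substitute $u=(s-1)\log x$, so that $\tfrac{dx}{x}=\tfrac{du}{s-1}$, $x^{s-1}=e^{u}$, and $\log x=\tfrac{u}{s-1}$. A direct computation yields
\[
\int_{X_0}^{\infty}\frac{dx}{x^{s}\log^{1-\tau}x}=\frac{1}{(s-1)^{\tau}}\int_{(s-1)\log X_0}^{\infty}e^{-u}u^{\tau-1}\,du.
\]
As $s\searrow 1$ the lower limit tends to $0$, so the integral on the right converges to $\Gamma(\tau)$ by dominated convergence (the integrand is bounded by $e^{-u/2}u^{\tau-1}$ on $[1,\infty)$ for $s$ close to $1$, and by $u^{\tau-1}$ on $(0,1]$). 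Therefore
\[
(s-1)^{\tau}I_2(s)\le s(C+\varepsilon)\int_{(s-1)\log X_0}^{\infty}e^{-u}u^{\tau-1}\,du\;\xrightarrow[s\searrow 1]{}\;(C+\varepsilon)\Gamma(\tau).
\]
Combining the two contributions gives $\limsup_{s\searrow 1}(s-1)^{\tau}A[s]\le(C+\varepsilon)\Gamma(\tau)$, and since $\varepsilon>0$ was arbitrary, the lemma follows.

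The only step requiring any care is verifying that the substitution cleanly produces the Gamma integral and that the tail $(s-1)\log X_0\to 0$ lets us recover the full $\Gamma(\tau)$ in the limit; everything else is standard partial summation and a choice of threshold. I do not anticipate a genuine obstacle, only bookkeeping around the change of variables.
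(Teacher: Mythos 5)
Your proof is correct and is essentially the same argument as the paper's: Abel summation to write $A[s]=s\int_1^\infty A(x)x^{-s-1}\,dx$, a split at a threshold beyond which $A(x)\le(C+\varepsilon)x/\log^{1-\tau}x$, and the substitution $u=(s-1)\log x$ producing the Gamma integral (the paper writes $a>\alpha$ where you write $C+\varepsilon$). No gaps; the bookkeeping you flag is exactly what the paper does.
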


\begin{proof}
We know that for all $s>1$, we have
\[
s\int_{1}^{\infty}\frac{A(x)}{x^{s+1}}dx=\sum_{a\in A}s\int_a^{\infty}\frac{1}{x^{s+1}}dx=\sum_{a\in A}\frac{1}{a^s}=A[s].
\]
Pick now $a>\alpha$, where
\[
\mkern-4mu\alpha=\limsup_{x\to\infty}A(x)\frac{\log^{1-\tau}x}{x}.
\]
Then, there exists $x_0>1$ such that for all $x\ge x_0$, we have
\[
\frac{A(x)}{x}\le\frac{a}{\log^{1-\tau}x}.
\]
Consequently,
\[
A[s]\le s\int_{1}^{x_0}\frac{A(x)}{x^{s+1}}dx+sa\int_{x_0}^{\infty}\frac{1}{x^s\log^{1-\tau}x}dx.
\advance\belowdisplayshortskip by8pt
\]
Changing variable and writing $t = (s-1)\log x$ in the rightmost integral, we get
\[
A[s]\le s\int_{1}^{x_0}\frac{A(x)}{x^{s+1}}dx+\frac{s}{(s-1)^{\tau}}a\int_{(s-1)\log x_0}^{\infty}\!\!\!\!t^{\tau-1}e^{-t}dt.
\]
Introducing the gamma function, we may now rewrite the previous inequality as
\[
(s-1)^{\tau}A[s]\le s(s-1)^{\tau}\!\!\int_{1}^{x_0}\frac{A(x)}{x^{s+1}}dx - sa\!\!\int_{0}^{(s-1)\log x_0}\!\!\!t^{\tau-1}e^{-t}dt+\Gamma(\tau)sa.
\]
Since $s\searrow1$, we have
\[
s(s-1)^{\tau}\!\!\int_{1}^{x_0}\frac{A(x)}{x^{s+1}}dx\to 0,\quad sa\!\!\int_{0}^{(s-1)\log x_0}\!\!\!t^{\tau-1}e^{-t}dt\to 0\;\text{ and }\;\Gamma(\tau)sa\to \Gamma(\tau)a,
\]
and thus
\[
\limsup_{s\searrow1}(s-1)^{\tau}A[s]\le\Gamma(\tau)a.
\]
This proves the desired inequality.
\end{proof}

The following lemma improves on Raikov's result.

\begin{lem}\label{factorial_raikov}
For a multiplicative basis $A$ of order $h$ for $\mathbb{Z}^+$, we have
\[
\limsup_{s\searrow1}(s-1)^{\frac{1}{h}}A[s]\ge\sqrt[h]{h!}\,.
\]
\end{lem}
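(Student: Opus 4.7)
The plan is to strengthen the bare inequality $S_{A,h}(n)\ge 1$ by exploiting that permutations of a distinct-factor representation yield $h!$ ordered representations. Concretely, if $n$ admits some $(a_1,\ldots,a_h)\in A^h$ with pairwise distinct entries and $a_1\cdots a_h = n$, then $S_{A,h}(n)\ge h!$. Let $\mathcal B$ denote the set of $n\in\mathbb Z^+$ without such a distinct representation; for $n\in\mathcal B$, $A\in\MB_h$ still guarantees $S_{A,h}(n)\ge 1$. Summing, one obtains
\[
A[s]^h = \sum_{n=1}^{\infty}\frac{S_{A,h}(n)}{n^s}\ge h!\,\zeta(s)-(h!-1)\sum_{n\in\mathcal B}\frac{1}{n^s}.
\]

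The key step is to control the bad sum. I would split $\mathcal B$ into $\mathcal B_1$, the set of $n$ admitting a representation with some repeated factor $\ge 2$, and $\mathcal B_2=\mathcal B\setminus\mathcal B_1$. Elements of $\mathcal B_1$ have the form $a^2b_1\cdots b_{h-2}$ with $a\in A\cap[2,\infty)$ and $b_i\in A$, giving $\sum_{n\in\mathcal B_1}1/n^s\le A[2s]\cdot A[s]^{h-2}$. For $n\in\mathcal B_2$, every representation of $n$ must have its repeat among factors equal to $1$, so any representation uses at least two $1$'s and has its non-$1$ entries distinct; thus $n$ is a product of at most $h-2$ pairwise distinct elements of $A\setminus\{1\}$, yielding $\sum_{n\in\mathcal B_2}1/n^s\le\sum_{j=0}^{h-2}A[s]^j/j!$. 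Both bounds are $O\!\bigl(A[s]^{h-2}\bigr)$ as $s\searrow 1$, since $A[2s]$ stays bounded.

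To conclude, the statement is vacuous when $\limsup_{s\searrow 1}(s-1)^{1/h}A[s]=\infty$; otherwise let $C$ be this finite limsup. Then $A[s]^{h-2}=O\!\bigl((s-1)^{-(h-2)/h}\bigr)$ for $s$ near $1$, so $(s-1)\sum_{n\in\mathcal B}1/n^s=O\!\bigl((s-1)^{2/h}\bigr)\to 0$. Multiplying the displayed inequality by $s-1$ and taking $\liminf$ yields $\liminf_{s\searrow 1}(s-1)A[s]^h\ge h!$, hence $\limsup_{s\searrow 1}(s-1)^{1/h}A[s]\ge\sqrt[h]{h!}$.

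The main obstacle I foresee is the structural description of $\mathcal B_2$: justifying that $n$'s forced repetitions among $1$'s pin it down to a product of at most $h-2$ distinct non-$1$ elements of $A$ is the crux, since once this combinatorial dichotomy is in place the subsequent Dirichlet-series estimates are essentially routine applications of $A[2s]=O(1)$ and the hypothesis on $(s-1)^{1/h}A[s]$.
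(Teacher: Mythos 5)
Your proof is correct and is essentially the paper's argument: the paper packages your two exceptional sets $\mathcal B_1,\mathcal B_2$ into the single inequality $\frac{1}{h!}A[s]^h+B[s]^{h-1}\ge\zeta(s)$ with $B=A\cup\{a^2:a\in A\}$ (note $1=1^2\in B$ absorbs your repeated-ones case $\mathcal B_2$), and then kills $(s-1)B[s]^{h-1}$ using $B[s]\le A[s]+A[2s]$ exactly as you kill $(s-1)\cdot O\bigl(A[2s]A[s]^{h-2}\bigr)$. The combinatorial dichotomy you flag as the crux (every representation of a bad $n$ repeats only the factor $1$, forcing $n$ to be a product of at most $h-2$ distinct elements of $A\setminus\{1\}$) is sound, so there is no gap.
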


\begin{proof}
Without loss of generality, we may assume that
\[
\mkern20mu\limsup_{s\searrow1}(s-1)^{\frac{1}{h}}A[s]=\alpha<\infty.
\]
Set $B=A\mkern1mu\cup\mkern1mu\{a^2\mkern1mu:\mkern1mu a\in A\}$. Then,
\[
\frac{1}{h!}A[s]^{h}+B[s]^{h-1}=\sum_{n\ge 1}\frac{1}{n^s}\left(\frac{S_{A,h}(n)}{h!}+S_{B,h-1}(n)\right).
\]
Given that $A$ is a multiplicative basis of order $h$ for $\mathbb{Z}^+\!$, we can write $n=a_1\cdots a_h$ for some $a_i\in A$. If $S_{B,h-1}(n)=0$, then $a_i\neq a_j$ for all $i\,\neq\, j$, meaning that $S_{A,h}(n)\ge h!$. That is, for all $n\ge 1$, we have
\[
\frac{S_{A,h}}{h!}+S_{B,h-1}(n)\ge 1.
\]
Consequently, for all $s>1$, we get
\[
\frac{1}{h!}A[s]^{h}+B[s]^{h-1}\ge\zeta(s).
\]
Note that $B[s]\le A[s]+A[2s]$ for any $s>1$, and hence
\[
(s-1)B[s]^{h-1}\le (s-1)A[2s]^{h-1}+\sum_{j=1}^{h-1}\binom{h-1}{j}\left((s-1)^{\frac{1}{j}}A[s]\right)^jA[2s]^{h-1-j}.
\]
Clearly, $A[2s]\to A[2]\le\zeta(2)<\infty$ as $s\searrow 1$. Since $\alpha<\infty$, for all $1\le j\le h-1$, we get
\[
\lim_{s\searrow1}(s-1)^{\frac{1}{j}}A[s] = 0,
\]
and so
\[
\lim_{s\searrow1}(s-1)B[s]^{h-1} = 0.
\]
Therefore, we can conclude that
\[
\limsup_{s\searrow1}(s-1)\frac{1}{h!}A[s]^{h}\ge\limsup_{s\searrow1}(s-1)\zeta(s)= 1,
\]
which then completes the proof.
\end{proof}

\begin{proof}[Proof of theorem \ref{raikov:better}]
Using Lemma \ref{lem:0:infsup} and Lemma \ref{factorial_raikov}, for a multiplicative basis $A$ of order $h$, we have
\[
\limsup_{x\to\infty}A(x)\frac{\log^{1-\frac{1}{h}}}{x}\ge\frac{1}{\Gamma(\frac{1}{h})}\limsup_{s\searrow 1}(s-1)^{\frac{1}{h}}A[s]\ge\frac{\sqrt[h]{h!}}{\Gamma(\frac{1}{h})}.
\]
\end{proof}

\vskip8pt
\begin{proof}[Proof of theorem \ref{mult:sup}]
We prove by contradiction. Suppose that
$$
\liminf_{x\to \infty}\frac{\sum_{n\le x}S_{A_1,\dots ,A_h}(n)}{x}>\prod_{i=1}^{h}\left(\Gamma(\tau_i)\limsup_{x\to\infty}A_i(x)\frac{\log^{1-\tau_i}x}{x}\right).
$$
Pick $G$ satisfying
$$
\prod_{i=1}^{h}\left(\Gamma(\tau_i)\limsup_{x\to\infty}A_i(x)\frac{\log^{1-\tau_i}x}{x}\right) <G<\liminf_{x\to \infty}\frac{\sum_{n\le x}S_{A_1,\ldots,A_h}(n)}{x}.
$$
We can find $n_0>0$ such that for all $n\ge n_0$, we have
\[
b_n = \sum_{k\le n}S_{A_1,\dots  ,A_h}(k)\ge G\cdot n.
\]
Pick $n> n_0$ and $s>1$. Then,
\begin{align*}
\sum_{k=1}^{n}\frac{S_{A_1,\ldots,A_h}(k)}{k^s}&=\sum_{k=1}^{n}\frac{b_{k}-b_{k-1}}{k^s}=\frac{b_n}{n^s}+\sum_{k=1}^{n-1}\frac{b_k}{k}\left(\frac{k}{k^s}-\frac{k}{(k+1)^s}\right)\\
&\ge G\mkern-4mu\sum_{n_0\le k< n}\mkern-4mu\left(\frac{1}{k^{s-1}}-\frac{1}{(k+1)^{s-1}}+\frac{1}{(k+1)^s}\right)\ge G\mkern-8mu\sum_{n_0< k\le n}\frac{1}{k^s}.
\end{align*}
It follows that
\[
\sum_{k=1}^{\infty}\frac{s_{A_1,\dots ,A_h}(k)}{k^s}\ge G\cdot\zeta(s)-G\sum_{k\le n_0}\frac{1}{k^s}.
\]
Note that $\displaystyle G\sum_{k\le n_0}\frac{1}{k^s}$ tends to $G\sum_{k\le n_0}\frac{1}{k}$ as $s\searrow 1$, which expression is finite, and thus
\[
\liminf_{s\searrow1}(s-1)\sum_{k=1}^{n}\frac{S_{A_1,\ldots,A_h}(k)}{k^s}\ge\liminf_{s\searrow 1}(s-1)G\zeta(s)=G.
\]
From
\[
\mkern-18mu\sum_{k=1}^{n}\frac{S_{A_1,l\dots,A_h}(k)}{k^s}=\prod_{i=1}^{h}A_i[s]
\]
and using $\displaystyle \sum_{i=1}^h\tau _i=1$, we get
\[
\advance\abovedisplayskip by-8pt
G\le\liminf_{s\searrow1}(s-1)\prod_{i=1}^{h}A_i[s]=\liminf_{s\searrow1}\prod_{i=1}^{h}\left( (s-1)^{\tau _i}A_i[s]\right) \le \prod_{i=1}^{h}\limsup_{s\searrow1}(s-1)^{\tau_i}A_i[s].
\]
Using now Lemma \ref{lem:0:infsup}, we get
\[
\limsup_{s\searrow1}(s-1)^{\tau _i}A_i[s]\le\Gamma(\tau_i)\limsup_{x\to\infty}A_i(x)\frac{\log^{1-\tau_i}x}{x}.
\]
Putting it all together, it follows that
\[
G\le \prod_{i=1}^{h}\left( \Gamma(\tau _i)\limsup_{x\to\infty}A_i(x)\frac{\log^{1-\tau_i}x}{x}\right)\mkern-2mu,
\]
which, however, contradicts the choice of $G$.
\end{proof}	

\vskip8pt
\begin{proof}[Proof of Theorem \ref{precision:bound:2}]
We prove by showing that the set $P$ of prime numbers may be written as
\begin{equation}\label{part1}
\mkern-10mu P=P_1\cup \dots \cup P_h
\end{equation}
with $P_i\cap P_j=\emptyset $ for $i\ne j$ and with the partitions $P_i$ subject to
\begin{equation}\label{part2}
P_i(x)=\tau_i\frac{x}{\log x}+O\Big(\frac{x}{\log^2x}\Big)
\end{equation}
and
\begin{equation}\label{part3}
\lim_{x\to\infty}\bigg(\prod_{{p\le x,\;p\in P_i}}\frac{p}{p-1}\bigg)\bigg(\prod_{\substack{p\le x}}\frac{p-1}{p}\bigg)^{\tau_i}=a_i.
\advance\belowdisplayshortskip by8pt
\end{equation}
In order to see why the existence of such a partition indeed implies the theorem, we shall invoke the Wirsing--Odoni theorem (see \cite[Pr. 4]{finch2010roots}).
\begin{Tet}\textbf{(Wirsing--Odoni)}
Let $f$ be a multiplicative function. Assume that there exist constants $u$, $v$ such that $0 \le f(p^k)<uk^v$ for all primes $p$ and all positive integers $k$. Assume further that there exist real numbers $\xi>0$ and $1<r<2$ such that
$$\sum\limits_{x\ge p\text{ prime}}f(p)=\xi\frac{x}{\log x}+O\Big(\frac{x}{\log^r x}\Big)$$
as $x\to \infty$. Then, the product
$$
C_f=\frac{1}{\Gamma(\xi)}\prod_{p\text{ prime}}\Big(1+\frac{f(p)}{p}+\frac{f(p^2)}{p^2}+\frac{f(p^3)}{p^3}+\cdots\Big)\Big(1-\frac{1}{p}\Big)^{\xi}
$$
over the primes is convergent (and positive), and
$$
\sum\limits_{n\le x}f(n)=C_f\frac{x}{\log^{1-\xi}x}+O\Big(\frac{x}{\log^{r-\xi}x}\Big)
$$
as $x\to\infty$.
\end{Tet}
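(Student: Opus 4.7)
The plan is to prove the Wirsing--Odoni theorem by the Selberg--Delange method: factor the Dirichlet series of $f$ against a power of $\zeta$, and then invert via Perron's formula along a Hankel contour around the branch point at $s = 1$. Set $F(s) = \sum_{n=1}^\infty f(n) n^{-s}$; by the bound $f(p^k) < u k^v$ it converges absolutely on $\mathrm{Re}(s) > 1$ with Euler product $F(s) = \prod_p E_p(s)$, where $E_p(s) = \sum_{k \ge 0} f(p^k) p^{-ks}$. Define
$$
G(s) = \prod_p E_p(s)\Big(1 - \frac{1}{p^s}\Big)^\xi,
$$
so that $F(s) = \zeta(s)^\xi G(s)$ and $G(1) = \Gamma(\xi)\, C_f$. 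The strategy is to show $G$ extends holomorphically across $\mathrm{Re}(s) = 1$ with quantitative smoothness at $s = 1$, and then extract the asymptotics of $\sum_{n \le x} f(n)$ from the branch-type singularity of $\zeta(s)^\xi$.

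The key analytic input is to translate the hypothesis into control of $\log G$. Taking logs of the Euler product, the $k \ge 2$ contributions converge absolutely on $\mathrm{Re}(s) > 1/2$ by the polynomial bound on $f(p^k)$. The dominant linear piece is $\sum_p f(p) p^{-s} - \xi \sum_p p^{-s}$, and by partial summation the hypothesis
$$
\sum_{p \le x} f(p) = \xi\, \frac{x}{\log x} + O\Big(\frac{x}{\log^r x}\Big)
$$
forces this combination to extend continuously to $\mathrm{Re}(s) \ge 1$ with modulus of continuity $\log^{1-r}(1/|s-1|)$ at $s = 1$. This yields holomorphy of $G$ in a neighborhood of $\mathrm{Re}(s) \ge 1$ and an estimate $|G(s) - G(1)| \ll |s-1| \log^{1-r}(1/|s-1|)$ for $s$ near $1$ on that half-plane. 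Next apply Perron's formula at $c = 1 + 1/\log x$, truncated at $|t| \le T$ with $T$ a fixed power of $\log x$, and deform the contour into a Hankel-type path $\mathcal{H}$ looping around $s = 1$. Splitting $G(s) = G(1) + (G(s) - G(1))$ and substituting $s = 1 + w/\log x$, the Hankel representation $1/\Gamma(\xi) = (2\pi i)^{-1}\int_\mathcal{H} e^w w^{-\xi}\, dw$ converts the leading $G(1)\zeta(s)^\xi x^s/s$ piece into the main term $C_f\, x/\log^{1-\xi} x$.

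The main obstacle is the error term of the stated order $O(x/\log^{r-\xi} x)$. This requires three matched estimates: (i) the Hankel integral of $(G(s) - G(1))\zeta(s)^\xi x^s/s$ must be shown to be $O(x/\log^{r-\xi} x)$ via the modulus-of-continuity bound above; (ii) the short horizontal connectors between the truncated vertical line and the Hankel loop must contribute no more, using classical zero-free region estimates for $\zeta$ and the boundedness of $G$ there; (iii) the truncation tails at $|t| > T$ in Perron's formula must likewise be absorbed into the same order. Aligning all three pieces at the exponent $r - \xi$ is the delicate bookkeeping step of the argument, and it is precisely here that the quantitative strength of the hypothesis on $\sum_{p \le x} f(p)$ is consumed in full.
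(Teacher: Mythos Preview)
The paper does not prove this statement at all; it is quoted as a known result (the Wirsing--Odoni theorem) with a citation to \cite{finch2010roots}, and is then applied as a black box in the proof of Theorem~\ref{precision:bound:2}. There is therefore no proof in the paper to compare your proposal against.

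On the substance of your sketch: the Selberg--Delange route you outline is indeed the standard way such mean-value theorems are established, and the overall architecture (factor $F(s)=\zeta(s)^\xi G(s)$, Perron plus Hankel contour, Hankel integral identity for $1/\Gamma(\xi)$) is correct. One genuine soft spot, however, is your claim of ``holomorphy of $G$ in a neighborhood of $\mathrm{Re}(s)\ge 1$''. The hypothesis $\sum_{p\le x}f(p)=\xi x/\log x+O(x/\log^r x)$ with $1<r<2$ does \emph{not} yield analytic continuation of $\sum_p f(p)p^{-s}-\xi\sum_p p^{-s}$ (and hence of $G$) to any open set crossing the line $\mathrm{Re}(s)=1$; it only gives continuity up to that line together with the modulus-of-continuity bound you wrote down. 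Consequently you cannot simply ``deform the contour into a Hankel-type path'' in the usual Cauchy sense, since $F(s)$ need not be defined to the left of $\mathrm{Re}(s)=1$. The correct variants of Selberg--Delange that operate under hypotheses of this strength (see e.g.\ Tenenbaum, \emph{Introduction to Analytic and Probabilistic Number Theory}, II.5, or Odoni's original argument) work instead with a truncated Hankel path lying entirely in $\mathrm{Re}(s)\ge 1$ and exploit the regularity of $G$ along that line rather than any analytic continuation. Your steps (ii) and (iii) implicitly assume access to a region left of the $1$-line that the hypotheses do not provide, so as written the contour-shifting step has a gap.
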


Back to the proof, introduce
$$
A_i=\{n:\textrm{each prime factor of $n$ belongs to the set $P_i$}\}
$$
for $1\le i\le h$. We wish to apply the Wirsing--Odoni theorem to the multiplicative function defined as $f(p^k)=1$ if $p\in P_i$ and $f(p^k)=0$ if $p\notin P_i$ with $p$ a prime. It follows that
$$
\sum_{p\le x}f(p)=P_i(x)=\tau _i\frac{x}{\log x}+O\Big(\frac{x}{\log ^2x}\Big),
$$
and by condition (\ref{part3}), also
$$C_f=\frac{1}{\Gamma(\tau _i)}\prod_{p\textrm{ prime}}\Big(1+\frac{f(p)}{p}+\frac{f(p^2)}{p^2}+\frac{f(p^3)}{p^3}+\ldots\Big)\Big(1-\frac{1}{p}\Big)^{\tau_i}=\frac{a_i}{\Gamma(\tau _i)}.
$$
According to the Wirsing--Odoni theorem, we have
$$
\sum_{n\le x}f(n)=A_i(x)=\Big(\frac{a_i}{\Gamma(\tau _i)}+o(1)\Big)\frac{x}{\log^{1-\tau_i}x}.
$$
As $(A_1,\ldots,A_h)\mkern-1mu\in\mkern-1mu\MC_h$, we win.

The existence of a suitable partition subject to conditions (\ref{part1}), (\ref{part2}) and (\ref{part3}) is shown via the following lemma.

\begin{lem}\label{prim_particio}
Let $P=\{ p_1,p_2,\dots \}$ be the set of prime numbers with $p_1<p_2<\cdots$, and pick $0<\kappa\le 1$. Consider now $Q\subseteq P$ subject to
$$
Q(x)=\kappa\frac{x}{\log x}+O\Big(\frac{x}{\log^2x}\Big)
$$
and such that there exists $K\in\mathbb{Z}^+\!$ with
$$
Q\cap \{p_n,\ldots,p_{n+K}\}\ne\emptyset
$$
for all $n\in\mathbb{Z}^+\!$. Let $0<\tau <\kappa$, $a\in \mathbb{R}^+$. Then, there exist $R\subseteq Q$ and $L\in\mathbb{Z}^+\!$ such that for all $n\in \mathbb{Z}^+\!$,
$$
R\cap \{p_n+,\ldots,p_{n+L}\}\ne\emptyset,\quad R(x)=\tau\frac{x}{\log x}+O\Big(\frac{x}{\log^2x}\Big)
$$
and
$$
\lim_{x\to\infty}\bigg(\prod_{p\le x,\;p\in R}\frac{p}{p-1}\bigg)\bigg(\prod_{p\le x}\frac{p-1}{p}\bigg)^{\tau}=a.
$$
\end{lem}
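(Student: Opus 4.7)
My plan is to build $R$ in two stages: first construct a default subset $R_0 \subseteq Q$ with the correct density and gap, then perturb $R_0$ greedily to tune the limit of the displayed product to the prescribed value $a$.

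For the default, enumerate $Q=\{q_1<q_2<\cdots\}$ and set $\alpha=\tau/\kappa\in(0,1)$. I take $R_0:=\{q_j:\lfloor j\alpha\rfloor>\lfloor(j-1)\alpha\rfloor\}$, so $|R_0\cap\{q_1,\ldots,q_j\}|=\lfloor j\alpha\rfloor$. Inverting $Q(q_j)=j$ gives $q_j=(j\log j)/\kappa+O(j)$, from which $R_0(x)=\tau x/\log x+O(x/\log^2 x)$; consecutive elements of $R_0$ span at most $\lceil 1/\alpha\rceil$ consecutive elements of $Q$ and hence at most $L_0:=\lceil 1/\alpha\rceil(K+1)$ consecutive primes, so $R_0$ satisfies the gap condition. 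Writing the product as $T(x)=\prod_{p\le x}(p/(p-1))^{\mathbf{1}_R(p)-\tau}$ and expanding $\log(p/(p-1))=1/p+O(1/p^2)$, partial summation from $R(x)=\tau x/\log x+O(x/\log^2 x)$ and Mertens' theorem show that $\log T(x)$ converges to some $\ell_R\in\RR$, so $T(x)\to e^{\ell_R}$. Writing $R=(R_0\cup A)\setminus B$ for disjoint $A\subseteq Q\setminus R_0$ and $B\subseteq R_0$, the increment $\ell_R-\ell_{R_0}=\sum_{p\in A}\log(p/(p-1))-\sum_{p\in B}\log(p/(p-1))$ is what the perturbation must steer to $\log a-\ell_{R_0}$.

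For the perturbation I would scan the primes of $Q$ in increasing order and, at each $q$, choose among including $q$ in $A$ (if $q\notin R_0$), placing $q$ in $B$ (if $q\in R_0$, its nearest $R_0$-neighbours on both sides lie within $L_0$, and neither of them has already been removed), or leaving $q$ alone; the choice minimises the distance from the running partial sum $s_q:=\ell_{R_0}+\sum_{p\in A,\,p\le q}\log(p/(p-1))-\sum_{p\in B,\,p\le q}\log(p/(p-1))$ to $\log a$, subject to $|A(t)-B(t)|\le C\,t/\log^2 t$ at every scale $t$. Partial summation against the budget $Ct/\log^2 t$ shows the total range of $s$-adjustment achievable is of order $C$; since the implicit constant in $O(x/\log^2 x)$ may depend on $a$, we take $C$ large enough to cover $|\log a-\ell_{R_0}|$. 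The individual step sizes $\log(q/(q-1))$ tend to zero, while the cumulative step-size summed over the permitted primes diverges (by Mertens applied to $Q$ and to $Q\setminus R_0$), so the standard greedy argument---the error $|s_q-\log a|$ is non-increasing and strictly decreases by the step size whenever it exceeds it---forces $s_q\to\log a$ and hence $T(x)\to a$. Setting $L:=2L_0$ handles the gap, since the conservative "exclude" rule creates gaps of at most $2L_0$.

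The principal obstacle is harmonising three simultaneous requirements: reaching $\log a$ exactly, respecting the density bound at \emph{every} scale (not just asymptotically), and preserving the gap. The partial-summation argument shows the achievable $\ell$-range is only a constant multiple of the density tolerance $C$, so $C$ must grow with $|\log a|$---a dependence the statement permits by not fixing the implicit constant. Once $C$ is fixed, the conservative "exclude" rule preserves the gap, and the vanishing of $\log(q/(q-1))$ lets the greedy error-reduction drive the limit to the exact target.
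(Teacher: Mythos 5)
Your base set $R_0$ (keeping roughly every $(\kappa/\tau)$-th element of $Q$) is the same as the paper's greedy rule ``add $p\in Q$ to $R$ iff $R(p-1)<\tfrac{\tau}{\kappa}Q(p)$'', and both arguments then establish that $\log T(x)$ converges for the base set; the genuine divergence is in how the limit is steered to the exact value $\log a$. The paper pairs off, in each block of $N$ consecutive primes, an $r_k\in R$ with a larger $s_k\in Q\setminus R$ and corrects the constant by \emph{swapping} $r_k\leftrightarrow s_k$ on a suitable index set: each swap changes $R(x)$ by at most $1$ and only inside a bounded window, so density and gap conditions are preserved for free, and exactness comes from a subsum-representation theorem applied to $q_k=\log\frac{r_k}{r_k-1}-\log\frac{s_k}{s_k-1}$ --- whose hypotheses ($\sum q_k<\infty$ and $q_n\le\sum_{k>n}q_k$) are what force the paper to invoke the Baker--Harman--Pintz bound $p_{n+1}-p_n\ll p_n^{0.6}$. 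You instead perturb by \emph{individual} insertions and deletions under an explicit counting budget $|A(t)-B(t)|\le Ct/\log^2t$ and tune by greedy error-reduction; since your step sizes are $\log\frac{p}{p-1}\asymp 1/p$, the ``each term is dominated by its tail'' condition is trivial and you need nothing beyond Mertens and Bertrand, which is a real simplification. The price is that you must verify by hand what the swaps give the paper for free, and this is where your sketch is thin: (i) the adjustment still available from scale $t$ onward under the budget is only $\asymp C/\log t$, so you should check that the greedy, which must idle whenever the budget binds, consumes range exactly as fast as it reduces the error during the initial transient (true, since it never moves away from the target), and that in the terminal oscillating phase the number of perturbations up to $t$ is $O(\log t)\ll t/\log^2t$, so the budget never binds again; (ii) the divergence you need is of $\sum 1/p$ over $R_0$ and over $Q\setminus R_0$ (not over $Q$), both of which do follow from the relative densities $\tau$ and $\kappa-\tau$ being positive. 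With those points filled in, your argument is complete and arguably more elementary than the paper's.
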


\vskip16pt
The desired partition of the prime numbers now follows recursively. Indeed, it is known that
$$
P(x)=\frac{x}{\log x}+O\Big(\frac{x}{\log^2 x}\Big).
$$
Then, by Lemma \ref{prim_particio}, there exist $Q_1\subseteq Q_0=P$ and $L_1\in\mathbb{Z}^+\!$ such that for all $n\in \mathbb{Z}^+\!$, we have
$$
Q_1\cap\{p_n,\ldots,p_{n+L_1}\}\ne\emptyset,\quad Q_1(x)=(\tau_2+\cdots+\tau_h)\frac{x}{\log x}+O\Big(\frac{x}{\log^2x}\Big)
$$
and
$$
\lim_{x\to\infty}\bigg(\prod_{p\le x,\;p\in Q_1}\frac{p}{p-1}\bigg)\bigg(\prod_{p\le x}\frac{p-1}{p}\bigg)^{\tau_2+\dots+\tau_h}=a_2\cdots a_h.
\advance\belowdisplayshortskip by8pt
$$
Set $P_1=Q_0\setminus Q_1$. As $\tau _1+\dots +\tau _h=1$ and $a_1\cdots a_h=1$, we get
$$
P_1(x)=\tau_1\frac{x}{\log x}+O\Big(\frac{x}{\log^2x}\Big)
$$
and
$$
\lim_{x\to\infty}\bigg(\prod_{p\le x,\;p\in P_1}\frac{p}{p-1}\bigg)\bigg(\prod_{p\le x}\frac{p-1}{p}\bigg)^{\tau_1}=a_1.
\advance\belowdisplayshortskip by8pt
$$
Continuing in a similar fashion, say that we have already defined $P_1,\dots ,P_j\subseteq P$ for some $1\le j<h$ with $P_u\cap P_v=\emptyset $ for $1\le u<v\le j$ and with the partitions $P_i$ subject to
$$
P_i(x)=\tau_i\frac{x}{\log x}+O\Big(\frac{x}{\log ^2x}\Big)
$$
and
$$
\lim_{x\to\infty}\bigg(\prod_{p\le x,\;p\in P_i}\frac{p}{p-1}\bigg)\bigg(\prod_{p\le x}\frac{p-1}{p}\bigg)^{\tau _i}=a_i
\advance\belowdisplayshortskip by8pt
$$
for $1\le i\le j$. Assume further that for
$$
\advance\abovedisplayskip by-4pt
Q_j=P\setminus (P_1\cup\cdots\cup P_j),
$$
there is a positive integer $L_j$ such that
$$
Q_j\cap\{p_{n+1},\ldots,p_{n+L_j}\}\ne\emptyset
\advance\belowdisplayskip by-8pt
$$
for all $n\in\mathbb{Z}^+$. Then,
$$
(P_1\cup\cdots\cup P_j)(x)=(\tau_1+\cdots+\tau_j)\frac{x}{\log x}+O\Big(\frac{x}{\log^2x}\Big)
$$
and
$$
\lim_{x\to\infty}\bigg(\prod_{p\le x,\;p\in P_1\cup\cdots\cup P_j}\frac{p}{p-1}\bigg)\bigg(\prod_{p\le x}\frac{p-1}{p}\bigg)^{\tau_1+\cdots+\tau_j}=a_1\cdots a_j.
\advance\belowdisplayshortskip by8pt
$$
If $j\le h-2$, then there exist $Q_{j+1}\subseteq Q_j$ and $L_{j+1}\in\mathbb{Z}^+\!$ such that for all $n\in\mathbb{Z}^+\!$,
$$Q_{j+1}\cap\{p_n,\ldots,p_{n+L_{j+1}}\}\ne\emptyset,\quad Q_{j+1}=(\tau_{j+2}+\cdots+\tau_h)\frac{x}{\log x}+O\Big(\frac{x}{\log^2x}\Big)$$
and
$$
\lim_{x\to\infty}\bigg(\prod_{p\le x,\;p\in Q_{j+1}}\frac{p}{p-1}\bigg)\bigg(\prod_{p\le x}\frac{p-1}{p}\bigg)^{\tau_{j+2}+\cdots+\tau_h}=a_{j+2}\cdots a_h.
\advance\belowdisplayshortskip by8pt
$$
As $\tau_1+\dots+\tau_h=1$ and $a_1\cdots a_h=1$, we get
$$
P_{j+1}(x)=\tau_{j+1}\frac{x}{\log x}+O\Big(\frac{x}{\log ^2x}\Big)
$$
and
$$
\lim_{x\to\infty}\bigg(\prod_{p\le x,\;p\in P_{j+1}}\frac{p}{p-1}\bigg)\bigg(\prod_{p\le x}\frac{p-1}{p}\bigg)^{\tau_{j+1}}=a_{j+1}.
\advance\belowdisplayshortskip by8pt
$$
If $j=h-1$, set $P_h=Q_{h-1}=P\setminus(P_1\cup\cdots\cup P_{h-1})$. Note that we have
$$
(P_1\cup\cdots\cup P_{h-1})(x)=(\tau_1+\cdots+\tau_{h-1})\frac{x}{\log x}+O\Big(\frac{x}{\log^2x}\Big)
$$
and
$$
\lim_{x\to\infty}\bigg(\prod_{p\le x,\;p\in P_1\cup\cdots\cup P_{h-1}}\frac{p}{p-1}\bigg)\bigg(\prod_{p\le x}\frac{p-1}{p}\bigg)^{\tau_1+\cdots+\tau_{h-1}}=a_1\cdots a_{h-1}.
\advance\belowdisplayshortskip by8pt
$$
Once again, as $\tau _1+\cdots+\tau_h=1$ and $a_1\cdots a_h=1$, we get
$$
P_h(x)=\tau_h\frac{x}{\log x}+O\Big(\frac{x}{\log^2x}\Big)
$$
and
$$
\lim_{x\to\infty}\bigg(\prod_{p\le x,\;p\in P_h}\frac{p}{p-1}\bigg)\bigg(\prod_{p\le x}\frac{p-1}{p}\bigg)^{\tau_h}=a_h,
\advance\belowdisplayshortskip by8pt
$$
which then completes the argument.

It hence remains to prove Lemma \ref{prim_particio}.

\begin{proof}[Proof of Lemma \ref{prim_particio}]
Let us construct $R\subseteq Q$ by picking the elements of $Q$ and adding $p\in Q$ to $R$ if $R(p-1)<\frac{\tau}{\kappa}Q(p)$. This then yields
$$
R(x)=\frac{\tau}{\kappa}Q(x)+O(1)
$$
and so
\begin{equation}\label{Rbecsles}
R(x)=\tau\frac{x}{\log x}+O\Big(\frac{x}{\log^2 x}\Big).
\end{equation}
Consequently, with $N$ large enough,
\begin{equation}\label{tartalmaz1}
|R\cap\{p_n,p_{n+1},\ldots,p_{n+N-1}\}|\ge 2,
\end{equation}
\begin{equation}\label{tartalmaz2}
|(Q\setminus R)\cap\{p_n,p_{n+1},\ldots,p_{n+N-1}\}|\ge 2
\end{equation}
and there exist numbers $r\in R\cap\{p_n,p_{n+1},\ldots,p_{n+N-1}\}$, $s\in(Q\setminus R)\cap\{p_n,p_{n+1},\ldots,p_{n+N-1}\}$ such that $r<s$ for all $n\in\mathbb{Z}^+\!$. We can also find sequences $(r_k)_k$, $(s_k)_k$ of primes with $r_k\in R\cap [p_{(k-1)N+1},p_{kN}]$, $s_k\in(Q\setminus R)\cap[p_{(k-1)N+1},p_{kN}]$ such that $r_k<s_k$ for all $k\in\mathbb{Z}^+$. Introducing
$$
q_k=\log(\frac{r_k}{r_k-1})-\log(\frac{s_k}{s_k-1})>0
$$
and
$$
\advance\abovedisplayshortskip by-8pt
q'_k=\frac{1}{r_k}-\frac{1}{s_k},
$$
and referring to any function $f:\mathbb{Z}^+\to\{-1,1\}$ as a sign function, the proof is now an application of the following lemma.

\begin{lem}\label{q_sign}
With notation as above:
\begin{enumerate}
\item
\begin{equation}\label{Qszum}
\abovedisplayshortskip=-20pt
\sum_{k=1}^{\infty} q_k<\infty
\end{equation}
\item There exists an integer $M$ such that for any real number $s$ satisfying
$$
-\sum\limits_{k=M}^{\infty}q_k\le s\le\sum\limits_{k=M}^{\infty}q_k,
$$
there exists a sign function $f$ with
$$\sum\limits_{k=M}^{\infty}f(k)q_k=s.$$
\end{enumerate}
\end{lem}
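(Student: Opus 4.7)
My plan is to split the proof into the two claims; both reduce to standard Prime Number Theorem (PNT) size estimates for the $q_k$. For part (1), I will use $\log(1+x)\le x$ applied to $x=(s_k-r_k)/((r_k-1)s_k)>0$ to bound $q_k\le(s_k-r_k)/((r_k-1)s_k)$. Since both $r_k$ and $s_k$ lie in $[p_{(k-1)N+1},p_{kN}]$, PNT (in the form $p_m=(1+o(1))m\log m$) gives $r_k\ge c_1 kN\log(kN)$ and $s_k-r_k\le p_{kN}-p_{(k-1)N+1}\le c_2 N\log(kN)$ for large $k$, whence $q_k=O\!\left(1/(k^2N\log(kN))\right)$, which is summable.

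For part (2), I would appeal to the following elementary greedy principle: if $(q_k)_{k\ge M}$ is a positive summable sequence with $q_k\le\sum_{j>k}q_j$ for every $k\ge M$, then every $s\in[-\sum_{k\ge M}q_k,\sum_{k\ge M}q_k]$ can be realised as $s=\sum_{k\ge M}f(k)q_k$ for some sign function $f$. The construction is the greedy rule $T_M=s$, $f(k)=\operatorname{sign}(T_k)$, $T_{k+1}=T_k-f(k)q_k$; an induction (splitting on whether $|T_k|\ge q_k$, which is straightforward, or $|T_k|<q_k$, where the tail-dominance gives $|T_{k+1}|=q_k-|T_k|\le q_k\le\sum_{j>k}q_j$) shows $|T_k|\le\sum_{j\ge k}q_j$ for every $k$, so $T_k\to 0$ and the series sums to $s$. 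The whole task thus reduces to verifying tail-dominance for $k\ge M$.

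To verify the tail-dominance, I pair the upper bound on $q_k$ from (1) with a matching lower bound on $q_j$: using $\log(1+y)\ge y/2$ for small $y>0$ and $s_j-r_j\ge 2$ (valid for $j$ large, as both primes are odd), I get $q_j\ge 1/(r_js_j)\ge c_3/(jN\log(jN))^2$, and an integral comparison gives $\sum_{j>k}q_j\ge c_4/(N^2k(\log(kN))^2)$. Combined with $q_k=O(1/(k^2 N\log(kN)))$, the ratio $q_k/\sum_{j>k}q_j$ is $O(N\log(kN)/k)\to 0$, so tail-dominance holds for every $k\ge M$ once $M$ is large enough. The main technical obstacle is keeping the implicit constants uniform in $k$: both the upper bound on $q_k$ and the lower bound on the tail must have constants depending only on $N$, so that a single $M$ can be chosen which works for all $k\ge M$ simultaneously.
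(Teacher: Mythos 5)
Your overall architecture (summability from gap estimates, a greedy/rearrangement principle, and a tail-dominance check) is the same as the paper's, and your treatment of part (2) is sound: the greedy construction with the invariant $|T_k|\le\sum_{j\ge k}q_j$ is a correct, self-contained substitute for the Pólya--Szegő exercise the paper cites, and your lower bound $q_j\ge 1/(r_js_j)\gg 1/(jN\log(jN))^2$ for the tail is legitimate (upper bounds on $p_{jN}$ do follow from the prime number theorem).

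The genuine gap is in part (1), in the step $s_k-r_k\le p_{kN}-p_{(k-1)N+1}\le c_2N\log(kN)$, which you attribute to the PNT. The PNT controls $p_m$ only up to an error $o(m\log m)$, which swamps the quantity $p_{kN}-p_{(k-1)N+1}\approx N\log(kN)$ you are trying to bound; it gives no pointwise control on consecutive prime gaps. Worse, the claimed inequality is actually false infinitely often: by Erd\H{o}s--Rankin there are gaps $p_{m+1}-p_m$ exceeding $C\log p_m$ for every constant $C$, so no fixed $c_2$ works. You need a genuine prime-gap theorem here; the paper invokes Baker--Harman--Pintz in the form that $[x,x+x^{0.6}]$ contains a prime, giving $s_k-r_k\le N r_k^{0.6}$ and hence $q_k\ll N/r_k^{1.4}\ll N/k^{1.4}$. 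This weaker (but true) bound still suffices for everything you do: the series $\sum q_k$ converges, and in the tail-dominance check the ratio becomes $q_k/\sum_{j>k}q_j\ll N^3(\log(kN))^2/k^{0.4}\to 0$, so a single $M$ still exists. With that one substitution your proof goes through and coincides in substance with the paper's.
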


\begin{proof}
To prove (1), first note that
$$
q_k=\frac{1}{r_k}+\frac{1}{2r_k^2}+O\Big(\frac{1}{r_k^3}\Big)-\Big(\frac{1}{s_k}+\frac{1}{2s_k^2}+O\Big(\frac{1}{s_k^3}\Big)\Big)=q'_k+q'_k\frac{r_k+s_k}{2r_ks_k}+O\Big(\frac{1}{r_k^3}\Big)=(1+o(1))q'_k
$$
as $k\to \infty$. Hence, for $k$ large enough, we have $0.5q_k<q_k'<1.5q_k$. The intervals $[x,x+x^{0.6}]$ are known to contain prime numbers for $x$ large enough (see e.g. \cite{BHP}), so if $k$ is large enough, we get
$$
s_k-r_k\le p_{(k+1)N}-p_{kN+1}\le Nr_k^{0.6}.
$$
It follows that
$$
q_k\le 2q'_k=2\frac{s_k-r_k}{r_ks_k}\le\frac{2Nr_k^{0.6}}{r_k^2}=\frac{2N}{r_k^{1.4}}\le\frac{2N}{k^{1.4}}
$$
for $k$ large enough, and so $\displaystyle\sum_{k=1}^{\infty}q_k$ is finite, indeed.

As for (2), we shall turn to a classical result about numerical series (see e.g. \cite{PSZ} p. 29, Exercise 131): if the assumptions
$$
b_n>0,\quad\sum_{n=1}^{\infty}b_n\text{ is finite}\quad\text{and}\quad b_k\le\sum_{n=k+1}^{\infty}b_n\text{ for all $n\in\mathbb{Z}^+$}
$$
are met and we have
$$
0\le t\le\sum_{n=1}^{\infty}b_n,
$$
then there is a function $g:\mathbb{Z}^+\to\{0,1\}$ such that
$$
t=\sum_{n=1}^{\infty }g(n)b_n
$$
(noting that in the excercise, the assumption that $b_{n}\le b_{n+1}$ for all $n\in \mathbb{Z}^+$ is unnecessary).

Now, if
$$
\advance\abovedisplayshortskip by-8pt
-\sum_{n=1}^{\infty}b_n\le s\le\sum_{n=1}^{\infty}b_n,
$$
then there is a function $g:\mathbb{Z}^+\to\{0,1\}$ such that
$$
\sum_{n=1}^{\infty }g(n)b_n=\frac{s}{2}+\frac{1}{2}\sum_{n=1}^{\infty}b_n.
$$
In particular, $f(n)=2g(n)-1$ defines a sign function $f$, and we have
$$
\sum_{n=1}^{\infty}f(n)b_n=s.
$$
Hence, it suffices to show that there is a positive integer $M$ such that
$$
q_n<\sum\limits_{k=n+1}^{\infty}q_k
$$
holds for $n\ge M$.

It is known that for $x$ large enough, there are at least $\frac{1}{2}\frac{x}{\log x}$ prime numbers between $x$ and $2x$, so there are at least $\frac{1}{2N}\frac{r_n}{\log r_n}$ prime numbers $r_k$ between $r_n$ and $2r_n$. For all these primes, we get
\[
q_k'=\frac{s_k-r_k}{s_kr_k}\ge\frac{2}{(2r_n)^2}
\]
and so
\[
\advance\abovedisplayshortskip by-8pt
q_k\ge \frac{1}{4r_n^2}.
\]
Hence,
\[
\sum_{r_n<r_k<2r_n}q_{k}\ge\frac{1}{2N}\frac{r_n}{\log r_n}\frac{1}{4r_n^2}=\frac{1}{8Nr_n\log r_n}
\advance\belowdisplayshortskip by8pt
\]
for $n$ large enough. It follows that
$$
q_n\le \frac{2N}{r_n^{1.4}},
$$
and since we have
$$
\frac{1}{8Nr_n\log r_n}\ge \frac{2N}{r_n^{1.4}}
\advance\belowdisplayshortskip by8pt
$$
for $n$ large enough, the proof of Lemma \ref{q_sign} is complete.
\end{proof}

We may now finish the proof of Lemma \ref{prim_particio}. As per the Wirsing--Odoni's theorem and (\ref{Rbecsles}), the limit
\[
\advance\abovedisplayskip by4pt
\lim_{x\to\infty}\bigg(\sum_{\substack{p\le x\\p\in R}}\log\Big(\frac{p}{p-1}\Big)-\sum_{\substack{p\le x}}\tau\log\Big(\frac{p}{p-1}\Big)\bigg)
\]
exists and is finite. By part (1) of Lemma \ref{q_sign}, the limit
\[
c=\lim_{x\to\infty}\bigg(\sum_{\substack{p\le x\\p\in R}}\log\Big(\frac{p}{p-1}\Big)-\sum_{\substack{p\le x}}\tau\log\Big(\frac{p}{p-1}\Big)-\sum\limits_{r_k,s_k<x}\frac{\log(\frac{r_k}{r_k-1})-\log(\frac{s_k}{s_k-1})}{2}\bigg)
\]
also exists and is finie. Set $H=\{r_k:k\in\mathbb{Z}^+\}\cup\{s_k:k\in\mathbb{Z}^+\}$. Conditions (\ref{tartalmaz1}) and (\ref{tartalmaz2}) imply that
$$
\sum\limits_{p\in R\setminus H}\log\Big(\frac{p}{p-1}\Big)\;\textrm{ and }\sum\limits_{p\in(Q\setminus R)\setminus H}\log\Big(\frac{p}{p-1}\Big)
$$
are divergent series, although $\log(\frac{p}{p-1})\to 0$ as $p\to \infty$ with $p\in R\setminus H$ or $p\in (Q\setminus R)\setminus H$.

At this point, we are to conditionally replace some elements of $R$, keeping the symmetric difference with the newly defined sets $R'$ and $R''$ finite. On the one hand, if
$$
c>\log (a)+\frac{1}{2}\sum\limits_{k=M}^{\infty}q_k,
$$
then we may drop finitely many elements from $R\setminus H$ to get $R'\subseteq R$ such that
\begin{equation}\label{hibatagos}
\advance\abovedisplayskip by4pt
\lim_{x\to\infty}\bigg(\sum_{\substack{p\le x\\p\in R'}}\log\Big(\frac{p}{p-1}\Big)-\sum_{\substack{p\le x}}\tau\log\Big(\frac{p}{p-1}\Big)-\sum\limits_{r_k, s_k<x}\frac{\log(\frac{r_k}{r_k-1})-\log(\frac{s_k}{s_k-1})}{2}\bigg)=\log(a)-d,
\end{equation}
where $-\frac{1}{2}\sum\limits_{k=M}^{\infty}q_k\le d\le \frac{1}{2}\sum\limits_{k=M}^{\infty}q_k.$ On the other hand, if
$$
\advance\abovedisplayskip by-5pt
c<\log (a)-\frac{1}{2}\sum\limits_{k=M}^{\infty}q_k,
$$
then we may add finitely many elements from $(Q\setminus R)\setminus H$ to $R$ to get $R'\supseteq R$ such that (\ref{hibatagos}) holds. This yields $R'$ that satisfy $(R'\cap H)=\{r_k:k\in\mathbb{Z}^+\}$, $|R'\bigtriangleup R|<\infty$ as well as ($\ref{hibatagos}$).

For the next step, note that by Lemma \ref{q_sign}, there exists a sign function $f$ such that $\frac{1}{2}\sum\limits_{k=M}^{\infty}f(k)q_k=d$. We introduce
\[R''=\left(R'\setminus \{r_k: f(k)=-1\}\right)\cup \{s_k: f(k)=-1\}.\]
Then, we get
\begin{equation*}
\begin{split}
&\lim_{x\to\infty}\bigg(\sum_{\substack{p\le x\\p\in R''}}\log\Big(\frac{p}{p-1}\Big)-\tau\sum_{\substack{p\le x}}\log\Big(\frac{p}{p-1}\Big)\bigg)=\\
&\lim_{x\to\infty}\bigg(\sum_{\substack{p\le x\\p\in R'}}\log\Big(\frac{p}{p-1}\Big)-\sum_{r_k<x,\;f(k)=-1}\log\Big(\frac{r_k}{r_k-1}\Big)-\log\Big(\frac{s_k}{s_k-1}\Big)-\tau\sum_{\substack{p\le x}}\log\Big(\frac{p}{p-1}\Big)\bigg)=\\
&\lim_{x\to\infty}\bigg(\sum_{\substack{p\le x\\p\in R'}}\log\Big(\frac{p}{p-1}\Big)-\tau\sum_{\substack{p\le x}}\log\Big(\frac{p}{p-1}\Big)-\sum_{r_k<x}\frac{\log(\frac{r_k}{r_k-1})-\log(\frac{s_k}{s_k-1})}{2}-\\
&\mkern23mu\sum_{r_k<x,\;f(k)=-1}\frac{\log(\frac{r_k}{r_k-1})-\log(\frac{s_k}{s_k-1})}{2}+\sum_{r_k<x,\;f(k)=1}\frac{\log(\frac{r_k}{r_k-1})-\log(\frac{s_k}{s_k-1})}{2}\bigg)=\\
&\lim_{x\to\infty}\bigg(\sum_{\substack{p\le x\\p\in R'}}\log\Big(\frac{p}{p-1}\Big)-\tau\sum_{\substack{p\le x}}\log\Big(\frac{p}{p-1}\Big)-\\
&\mkern51mu\sum_{r_k<x}\frac{\log(\frac{r_k}{r_k-1})-\log(\frac{s_k}{s_k-1})}{2}+\sum_{r_k<x}f(k)\frac{\log(\frac{r_k}{r_k-1})-\log(\frac{s_k}{s_k-1})}{2}\bigg)=\log(a)-d+d=\log(a).
\end{split}
\end{equation*}
In particular,
\[
\lim_{x\to\infty}\bigg(\prod_{\substack{p\le x\\p\in R''}}\frac{p}{p-1}\bigg)\bigg(\prod_{\substack{p\le x}}\frac{p-1}{p}\bigg)^{\tau}=a.
\]
By construction, there exists $N_1$ such that $|R(x)-R'(x)|\le N_1$, and we have $|R'(x)-R''(x)|\le 1$ for all $x\in\mathbb{Z}^+\!$, so
\begin{equation}\label{RRbecsles}
|R''(x)-R(x)|\le N'+1.
\end{equation}
Now, by (\ref{Rbecsles}), we have
$$
R''(x)=\tau \frac{x}{\log x}+O(\frac{x}{\log ^2x}).
\advance\belowdisplayshortskip by8pt
$$
Also, by (\ref{tartalmaz1}), we have $|R\cap\{p_n,\ldots,p_{n+(N'+2)N-1}\}|\ge 2N'+3$ for all $n\in\mathbb{Z}^+\!$. By (\ref{RRbecsles}), we thus get
$$
|R''\cap \{p_n,\ldots,p_{n+(N'+2)N-1}\}|\ge 1,
$$
which, then, completes the proof of Lemma \ref{prim_particio}.
\end{proof}

With that, we have demonstrated Theorem \ref{precision:bound:2}.
\end{proof}

\vskip8pt
\begin{proof}[Proof of theorem \ref{liminf:max}]
The proof of part (1) is an application of Theorem \ref{PS}. If $(A_1,\ldots,A_h)\in\MC_h$, then $A_1\cup\cdots\cup A_h\in\MB_h$ and
    \[(A_1\cup A_2\cup \ldots \cup A_h)(x)\le h\max\{A_1(x),\dots ,A_h(x)\}.\]
It now follows from Theorem \ref{PS} that
\[
\liminf_{x\to\infty}\frac{(A_1\cup\cdots\cup A_h)(x)}{\frac{x}{\log x}}>1,
\]
and so
\[
\liminf_{x\to\infty}\frac{\max\{A_1(x),\ldots,A_h(x)\}}{\frac{x}{\log x}}>\frac{1}{h},\]
which was to be shown.

We now prove part (2). The construction is based on the construction given by Pach and S\' andor in their proof of Theorem \ref{PS}. Set $[n]=\{1,\ldots,n\}$ for $n\in\mathbb{Z}^+$. We shall construct a strictly increasing sequence $\seq{n_i}{i}\subseteq\mathbb{Z}^+$ and sets $A_1^i,A_2^i,\ldots A_h^i\subset [n_i]$ such that the following conditions hold for all $i\ge1$:
\begin{enumerate}
	\item\label{maxmin:1} $A_1^i,A_2^i,\ldots A_h^i$ are multiplicative complements for $[n_i]$;
	\item\label{maxmin:2} $\max\{|A_1^i|,\ldots,|A_h^i|\}\le\left(\frac1h+\varepsilon\right)\frac{n_i}{\log n_i}$;
	\item\label{maxmin:3} $A_j^i\cap[n_{i-1}] = A_j^{i-1}\:\text{for all }1\le j\le h$.
\end{enumerate}
The sets $A_j=\bigcup_i A_j^i$ for $1\le j\le h$ then constitute multiplicative complements of order $h$. Moreover, condition \ref{maxmin:3} ensures that $A_j\cap [n_i] = A_j^i$, and by condition \ref{maxmin:2}, we get
\[
\max\{A_1(n_i),\ldots,A_h(n_i)\}\le\Big(\frac1h+\varepsilon\Big)\frac{n_i}{\log n_i}
\]
and hence
\[
\liminf_{x\to\infty}\max\{A_1(x),\ldots,A_h(x)\}\frac{\log x}{x}\le\frac{1}{h}+\varepsilon.
\advance\belowdisplayshortskip by4pt
\]
Let us set $n_0=N=\left\lceil\frac{256}{\varepsilon^2}\right\rceil+1$ and $A_1^0=A_2^0=\ldots=A_h^0=[n_0]$. We define the sequence $\seq{n_i}{i}$ and the sets $A_1^i,A_2^i,\ldots A_h^i$ recursively. Setting $x=n_i$, we shall pick $y=n_{i+1}$ large enough. Conditions on $y$ are imposed as we proceed with the proof. The first two conditions are $x<\frac{\varepsilon}{5}\mkern2mu y$ and $x^2\!<y$. We define{\parfillskip=0pt\par}%
\vskip-26pt\hskip12pt
\begin{minipage}[t]{.40\linewidth}
$$
A_j^{i+1}=A_j^i\cup F_0\cup F_1\cup F_2\cup F_3^j,\text{ where}
$$
\end{minipage}%
\begin{minipage}[t]{.5\linewidth}
\begin{align*}
F_0&=\left\{i:x<i<x y^{2/3}\right\}\!,\\
F_1&=\left\{pv:y^{2/3}<p<\frac{y}{x},\;p\text{ is prime},\,v\le\sqrt{x}\right\}\!,\\
F_2&=\left\{pv:\frac{y}{x}<p\le\frac{y}{N},\;p\text{ is prime},\,v\le\sqrt{\frac{y}{p}}\right\}\!,\\
F_3^j&=\left\{p:\frac{y}{N}<p\le y,\;p\text{ is prime}\colon \pi(p)\equiv j\;({\rm mod}\;h)\right\}\!.
\end{align*}
\end{minipage}
\vskip10pt
Note that if $y>x^2$, then we have $\min(y^{2/3},y/x)>x$, so for all $1\le j\le h$, the elements of $A_j^{i+1}\setminus A_j^i$ are larger than $x$. In particular, condition (\ref{maxmin:3}) is met.

We now use induction to show that every number $m:\;1\le m\le y$ can be written in the form $m=st$ with $s\in A_j^{i+1}$ and $t\in A_k^{i+1}$ for some $1\le j\ne k\le h$. Having this representation implies at once that $A_1^{i+1},A_2^{i+1},\ldots,A_h^{i+1}$ are multiplicative complements for $[y]$ since $1\in A_j$ for all $1\le j\le h$. Pick hence $m\leq y$. It is easy to see that $m$ can be written as $m=uv$ with $v\leq u$ such that either $u\leq y^{2/3}$ or $u>y^{2/3}$ is a prime number (e.g., see \cite{erdos1938}).

First, assume that $u\leq y^{2/3}$ is a prime number. If $x<v$, then both $u$ and $v$ lie in $(x,y^{2/3}x]$, so  we have $u,v\in A_1^{i+1}\cap A_2^{i+1}$, and we may write $m=uv$ with $u\in A_1^{i+1}$ and $v\in A_2^{i+1}$. If $v\leq x$, then we distinguish two cases:
\begin{itemize}
\item If $x<uv$, then $m$ can be written as $m=(uv)\cdot 1$ since $uv$ lies in $(x,y^{2/3}x]$.
\item If $uv\leq x$, then $m=uv$ can be written as $m=st$ with $s\in A_j^{i}$, $t\in A_k^{i}$ for some $1\le j\ne k\le h$ by the induction hypothesis.
\end{itemize}

Assume now $u>y^{2/3}$ to be some prime $p$.
\begin{itemize}
\item If $y^{2/3}<p\leq y/x$, then $p\in F_1\subset A_1^{i+1}\cap A_2^{i+1}$, and as $m\leq y$, we have $v=m/p\leq y/p\leq y^{1/3}$. If $x<v$, then $v\in F_0\subset A_1^{i+1}\cap A_2^{i+1}$, so $m$ can be written as $m=p\mkern1mu v$ with $p\in A_1^{i+1}$, $v\in A_2^{i+1}$. If $v\leq x$, then by the induction hypothesis, $v$ can be written as $v=v_1v_2$ with $v_1\in A_j^i$, $v_2\in A_k^i$ for some $1\le j\ne k\le h$. Without loss of generality, we may assume that $v_1\leq v_2$. Then, since $v_1\leq \sqrt{v_1v_2}=\sqrt{v}\leq\!\sqrt{x}$, we have $p\mkern1mu v_1\in F_1\subset A_j^{i+1}\cap A_k^{i+1}$ and $v_2\in A_k^i\subset A_k^{i+1}$, and hence $m$ can be written as $m=(pv_1)v_2$ with $p\mkern1mu v_1\in A_j^{i+1}$, $v_2\in A_k^{i+1}$.
\item If $y/x<p$ and $y/N \leq p$, then $p\in F_3^1\cup F_3^2\cup\ldots \cup F_3^h$, so there exists $1\le j\le h$ such that $p\in A_j^{i+1}$ and $v=m/p\leq y/p\leq N$. Consequently, there exists $1\le k\le h$, $k\ne j$ such that $v\in A_k^{i+1}$ since $[N]\subseteq A_1^{i+1}\cap\cdots\cap A_h^{i+1}$. In particular, $m$ can be written as $m=p\mkern1mu v$.
\item If $y/x<p<y/N$, then there exists $j$  with $N\leq j\leq x-1$ and $y/(j+1)<p\leq y/j$, so we have $v\le \frac{y}{p}< x$. By the induction hypothesis, we can find $v_1\in A_j^i$, $v_2\in A_k^i$ for $1\le j\ne k\le h$ such that $v=v_1v_2$. As before, without loss of generality, we may assume that $v_1\leq v_2$, so we have $v_1\leq \sqrt{v}\leq \sqrt{y/p}$. Then, $pv_1\in F_2\subset (A_j^{i+1}\cap A_k^{i+1})$ and $v_2\in A_k^i\subset A_k^{i+1}$, and hence $m$ can be written as $m=(pv_1)v_2$ with $pv_1\in A_j^{i+1}$, $v_2\in A_k^{i+1}$.
\end{itemize}
This shows that condition (\ref{maxmin:1}) is also met.

We now prove that for all $1\le j\le h$, condition (\ref{maxmin:2}) is met for $A_j^{i+1}$ and $y=n_{i+1}$. If $x^4<y$, then
$$
|A_j^i\cup F_0|=|A_j^i\cup\{i:x<i\leq y^{2/3}x\}|\leq y^{2/3}x<y^{11/12}<\frac{\varepsilon}{5}\,\frac{y}{\log y}
$$
for $y$ large enough. Moreover, if $x^4<y$ and $x\geq N>\frac{256}{\varepsilon ^2}$, then
\begin{align*}
|F_1|&=|\{pv:y^{2/3}<p\leq y/x,\;p\text{ is a prime},\;v<\sqrt{x}\}|\\
&\leq\pi(y/x)\sqrt{x}\leq 2\frac{y/x}{\log(y/x)}\sqrt{x}=2\frac{y}{\log y}\frac{1}{\sqrt{x}}\frac{1}{1-\frac{\log x}{\log y}}\leq\frac{\varepsilon}{5}\,\frac{y}{\log y}.
\end{align*}
If $y>x^{200/\varepsilon}$, then
\begin{align*}
|F_2|&=|\{pv:y/x<p\leq y/N,\;p\text{ is a prime},\;v\leq\sqrt{y/p}\}|\\
&\leq|\{pv:\exists j:N\leq j\leq x-1,\;y/(j+1)<p\leq y/j,\;p\text{ is a prime},\;v\leq\sqrt{j+1}\}|\\
&\leq\sum\limits_{j=N}^{x-1}\bigg(\pi\Big(\frac{y}{j}\Big)-\pi\Big(\frac{y}{j+1}\Big)\bigg)\sqrt{j+1}.
\end{align*}
Note that with $x$ fixed and as $y\to\infty$, we have $\pi\big(\frac{y}{j}\big)=\frac{y}{j\log y}+o\big(\frac{y}{j(\log y)^{1.5}}\big)$, yielding
$$
\pi\Big(\frac{y}{j}\Big)-\pi\Big(\frac{y}{j+1}\Big)=\frac{1}{j(j+1)}\frac{y}{\log y}+o\Big(\frac{y}{j(\log y)^{1.5}}\Big)
\advance\belowdisplayskip by8pt
$$
(for instance, it suffices to take $y=x^x$). Hence, we may write
$$
\sum\limits_{j=N}^{x-1}\bigg(\pi\Big(\frac{y}{j}\Big)-\pi\Big(\frac{y}{j+1}\Big)\bigg)\sqrt{j+1}=\bigg(\sum\limits_{j=N}^{x-1}\frac{1}{j\sqrt{j+1}}\bigg)\frac{y}{\log y}+o\Big(\frac{\sqrt{x}}{\sqrt{\log y}}\Big)\frac{y}{\log y},
$$
where
$$
\advance\abovedisplayshortskip by-8pt
\sum\limits_{j=N}^{x-1}\frac{1}{j\sqrt{j+1}}<\int_{\frac{256}{\varepsilon ^2}}^{\infty}\frac{1}{x^{3/2}}dx=\frac{\varepsilon }{8}.
$$
In particular, we have
$$
|\{pv:y/x<p\leq y/N,\;p\text{ is a prime},\;v\leq\sqrt{ y/p}\}|\leq\frac{\varepsilon}{5}\,\frac{y}{\log y}
$$
for $y$ large enough relative to $x$. This accounts now for the last term in the decomposition of $A_j^{i+1}$ as
$$
\advance\abovedisplayskip by-8pt
|F_3^j|=|\{p:y/N<p\leq y,\;p\text{ is a prime},\;\pi(p)\equiv j\;{\rm mod}\;h\}|\leq 1+\frac{\pi(y)}{h}\leq\left(\frac{1}{h}+\frac{\varepsilon}{5}\right)\frac{y}{\log y}
$$
for $y$ sufficiently large. Putting this all together, for all $1\le j\le h$ and for $y$ sufficiently large, we get
$$
|A_j^{i+1}|\leq |A_j^i|+|F_0|+|F_1|+|F_2|+|F_3^j|\le\Big(\frac{1}{h}+\varepsilon\Big)\frac{y}{\log y},
$$
completing the proof.
\end{proof}

\bigskip

\renewcommand{\refname}{Bibliography}

\end{document}